\newtheorem{theorem}{Theorem}
\newtheorem{lemma}{Lemma}
\newtheorem{proposition}{Proposition}
\newtheorem{claim}{Claim}
\newcommand\ceil[1]{\lceil#1\rceil}
\title{Graph fractal dimension and structure of fractal networks: a combinatorial perspective}
\author[1,3]{Pavel Skums}
\author[2]{Leonid Bunimovich}
\affil[1]{Department of Computer Science, Georgia State University, 1 Park Pl NE, Atlanta, GA, USA, 30303}
\affil[2]{School of Mathematics, Georgia Institute of Technology, 686 Cherry St NW, Atlanta, GA, USA, 30313}
\affil[3]{Corresponding author. \textit {Email: pskums@gsu.edu}}
\date{\today}
\begin{document}
\maketitle

\begin{abstract}
In this paper we study self-similar and fractal networks from the combinatorial perspective.  We establish analogues of topological (Lebesgue) and fractal (Hausdorff) dimensions for graphs and demonstrate that they are
naturally related to known graph-theoretical characteristics: rank dimension and product (or Prague or Ne{\v s}et{\v r}il-R{\"o}dl) dimension. Our approach reveals how self-similarity and fractality of a network are defined by a pattern of overlaps between densely connected network communities. It allows us to identify fractal graphs, explore the relations between graph fractality, graph colorings and graph Kolmogorov complexity, and analyze the fractality of several classes of graphs and network models, as well as of a number of real-life networks. We demonstrate the application of our framework to evolutionary studies by revealing the growth of self-organization of heterogeneous viral populations over the course of their intra-host evolution, thus suggesting mechanisms of their gradual adaptation to the host's environment. As far as the authors know, the proposed approach is the first theoretical framework for study of network fractality within the combinatorial paradigm. The obtained results lay a foundation for studying fractal properties of complex networks using combinatorial methods and algorithms. 


\medskip

{\bf Keywords:} Fractal network, Self-similarity, Lebesgue dimension, Hausdorff dimension, Kolmogorov complexity, Graph coloring, Clique, Hypergraph.
\end{abstract}

\section{Introduction}

Fractals are geometric objects that are widespread in nature and appear in many research domains, including dynamical systems, physics, biology and behavioural sciences \cite{falconer2004fractal}. By Mandelbrot's classical definition, geometric fractal is a topological space (usually a subspace of an Euclidean space), whose topological (Lebesgue) dimension is strictly smaller than the fractal (Hausdorff) dimension. It is also usually assumed that fractals have some form of geometric or statistical self-similarity \cite{falconer2004fractal}.

 Lately there was a growing interest in studying self-similarity and fractal
properties of complex networks, which is largely inspired by applications in biology, sociology, chemistry and computer science \cite{song2005self,shanker2007defining,dorogovtsev2013evolution,newman2003structure}. Although such studies are usually based on geniune  ideas from graph theory and general topology and provided a deep insight into structures of complex networks and mechanisms of their formation, they are often not supported by a rigorous mathematical framework.
As a result, such methods may not be directly applicable to many important classes of graphs and networks \cite{li2005towards,willinger2009mathematics}. In particular, many studies translate the definition of a topological fractal to networks by considering a graph as the finite metric space with the metric being the standard shortest path length, and identifying graph fractal dimension with the Minkowski–Bouligand (box-counting) dimension \cite{song2005self,shanker2007defining}. However, direct applications of the continuous definition to discrete objects such as networks can be problematic. Indeed, under this definition many real-life networks do not have well-defined fractal dimension and/or are not fractal and self-similar. This is in particular due to the fact that these networks have so-called ‘small-world’ property, which implies that their diameters are exponentially smaller than the numbers of their vertices \cite{song2005self}. Moreover, even if the box-counting dimension of a  network can be defined and calculated, it is challenging to associate it with graph structural/topological properties. As regards to the phenomenon of network self-similarity, previous studies described it as the preservation of network properties under a length-scale transformation \cite{song2005self}. However, geometric fractals possess somewhat stronger property: they are {\it comprised} of parts topologically similar to the whole rather than just have similar features at different scales. Finally, many computational tasks associated with the continuous definitions cannot be formulated as well-defined algorithmic problems and studied within the framework of theory of computational complexity, discrete optimization and machine learning. Thus, it is highly desirable to develop an understanding of graph dimensionsionality, self-similarity and fractality based on innate ideas and machineries of graph theory and combinatorics. There are several studies that translate certain notions of topological dimension theory to graphs using combinatorial methods \cite{smyth2010topological,evako1994dimension}.  However, to the best of our knowledge, a rigorous combinatorial theory of graph-theoretical analogues of topological fractals still has not been developed. 

In this paper we propose a combinatorial approach to the fractality of graphs, which consider natural network analogues of Lebesgue and Hausdorff dimensions of topological spaces from the graph-theoretical point of view. This approach allows to overcome the aforementioned difficulties and provides mathematically rigorous, algorithmically tractable and practical framework for study of network self-similarity and fractality. Roughly speaking, our approach suggests that fractality of a network is more naturally related to a pattern of overlaps between densely connected network communities rather than to the distances between individual nodes. It is worth noting that overlapping community structure of complex networks received considerable attention in network theory and has been a subject of multiple studies \cite{palla2005uncovering,ahn2010link}. Furthermore, such approach allows us to exploit the duality between partitions of networks into communities and encoding of networks using set systems. This duality has been studied in graph theory for a long time \cite{berge1984hypergraphs}, and allows for topological and information-theoretical interpretations of network self-similarity and fractality. 

The major results of this study can be summarized as follows: 

1) Lebesgue and Hausdorff dimensions of graphs are naturally related to known characteristics from the graph theory and combinatorics: rank dimension \cite{berge1984hypergraphs} and product (or Prague or Ne{\v s}et{\v r}il-R{\"o}dl) dimension  \cite{hell2004graphs}. These dimensions are associated with the patterns of overlapping cliques in graphs.  We underpin the connection between general topological dimensions and their network analogues by demonstrating that they measure the analogous characteristics of the respective objects:

\begin{itemize}
    \item Topological Lebesgue dimension and graph rank dimension are both associated with the representation of general compact metric spaces and graphs by intersecting families of sets. Such representations have been extensively studied in graph theory \cite{berge1984hypergraphs}, where it has been shown that any graph of a given rank dimension encodes the pattern of intersections of a family of finite sets with particular properties.  It turned out, that general compact metric spaces of a given Lebesgue dimension also can be approximated by intersecting families of sets with analogous properties. 
    \item Rank dimension defines a decomposition of a graph into its own images under stronger versions of graph homomorphisms, and can be interpreted as a measure of a graph self-similarity. 
\end{itemize}


2) Fractal graphs naturally emerge as graphs whose Lebesgue dimensions are strictly smaller than Hausdorff dimensions. We analyze in detail the fractality and self-similarity of scale-free networks,  Erd{\"o}s-Renyi graphs  and cubic and subcubic graphs. For such graphs, fractality is closely related to edge colorings, and separation of graphs into fractals and non-fractals could be considered as a generalization of one of the most renowned dichotomies in graph theory - the separation of graphs into class 1 and class 2 \cite{vizing1964estimate} (i.e graphs whose edge chromatic number is equal to $\Delta$  or $\Delta + 1$, where $\Delta$ is the maximum vertex degree of a graph). One of the examples of graph fractals is the remarkable class of  {\it snarks} \cite{gardner1977mathematical,chladny2010factorisation}. Snarks turned out to be the basic cubic fractals, with other cubic fractals being topologically reducible to them. 

3) Lebesgue and Hausdorff dimension of graphs are related to their Kolmogorov complexity -- one of the basic concepts of information theory, which is often studied in association with fractal and chaotic systems \cite{li2009Kolmogorov}. These dimensions measure the complexity of graph encoding using so-called set and vector representations. Non-fractal graphs are the graphs for which these representations are equivalent, while fractal graphs possesses additional structural properties that manifest themselves in extra dimensions needed to describe them using the latter representation.

4) Analytical estimations and experimental results reveal high self-similarity of sparse Erd{\"o}s-Renyi and Wattz-Strogatz networks, and lower self-similarity of preferential attachment and dense Erd{\"o}s-Renyi networks. Numerical experiments suggest that fractality is a rare phenomenon for basic network models, but could be significantly more common for real networks.

5) The proposed theory can be used to infer information about the mechanisms of real-life network formation. As an example, we analyzed genetic networks representing structures of 323 intra-host Hepatitis C populations sampled at different infection stages. The analysis revealed the increase of network self-similarity over the course of infection, thus suggesting intra-host viral adaptation and emergence of self-organization of viral populations over the course of their evolution.


We expect that the theory of graph fractals explored in this paper will facilitate study of fractal properties of graphs and complex networks. One of its possible applications is the possibility to represent various  computational tasks as algorithmic problems to be studied within the framework of theory of algorithms and computational complexity.

\section{Basic definitions and facts from measure theory, dimension theory and graph theory}

Let $X$ be a compact metric space. A family $\mathcal{C} = \{C_{\alpha} : \alpha \in A\}$ of open subsets of $X$ is a {\it cover}, if $X = \bigcup_{\alpha \in A} C_{\alpha}$. A cover $\mathcal{C}$ is {\it $k$-cover}, if every $x\in X$ belongs to at most $k$ sets from $\mathcal{C}$; {\it $\epsilon$-cover}, if for every set $C_i\in \mathcal{C}$ its diameter $diam(C_i)$ does not exceed $\epsilon$; $(\epsilon, k)$-cover, if it is both $\epsilon$-cover and $k$-cover. {\it Lebesgue dimension} $dim_L(X)$ of $X$ is the minimal integer $k$ such that for every $\epsilon > 0$ there exists $(\epsilon, k+1)$-cover of $X$. 

Let $\mathcal{F}$ be a semiring of subsets of a set X. A function $m:\mathcal{F}\rightarrow \mathbb{R}^+_0$ is {\it a measure}, if $m(\emptyset) = 0$ and for any countable collection of pairwise disjoint sets $A_i \in \mathcal{F},i=1,...,\infty$, one has $m(\cup_{i=1}^{\infty} A_i) = \sum_{i=1}^{\infty} m(A_i)$. 

Let now $X$ be a subspace of an Euclidean space $\mathbb{R}^d$. {\it Hyper-rectangle} $R$ is a Cartesian product of semi-open intervals: $R = [a_1,b_1)\times\dots\times [a_d,b_d)$, where $a_i,b_i \in \mathbb{R}$; the {\it volume} of a the hyper-rectangle $R$ is equal to $vol(R) = \prod_{i=1}^d (b_i - a_i)$. The {\it $d$-dimensional Jordan measure} of the set $X$ is defined as
$\mathcal{J}^d(X) = \inf\{\sum_{R\in \mathcal{C}} vol(R)\},$
where the infimum is taken over all finite covers $\mathcal{C}$ of $X$ by disjoint hyper-rectangles. The {\it $d$-dimensional Lebesgue measure} of a measurable set $\mathcal{L}^d(X)$ is defined analogously, with the infimum  taken over all countable covers $\mathcal{C}$ of $X$ by (not necessarily disjoint) hyper-rectangles. Finally, the {\it $d$-dimensional Hausdorff measure} of the set $X$ is defined as $\mathcal{H}^d(X) = \lim_{\epsilon \rightarrow 0} \mathcal{H}^s_{\epsilon}(X)$, where $\mathcal{H}^d_{\epsilon}(X) = \inf\{\sum_{C\in \mathcal{C}}diam(C)^d\},$ and the infimum is taken over all $\epsilon$-covers of $X$. These 3 measures are related: the Jordan and Lebesgue measures of the set $X$ are equal, if the former exists, while Lebesgue and Hausdorff measures of Borel sets differ only by a multiplicative constant.

{\it Hausdorff dimension} $dim_H(X)$ of the set $X$ is the value

\begin{equation}\label{hdimtop}
dim_H(X) = \inf\{s \geq 0 :\mathcal{H}^s(X) < \infty\}.
\end{equation}

Lebesgue and Hausdorff dimension of $X$ are related as follows:

\begin{equation}\label{lebvshaus}
dim_L(X) \leq dim_H(X).
\end{equation}

\noindent
The set $X$ is {\it a fractal} (by Mandelbrot's definition) \cite{edgar2007measure}, if the inequality (\ref{lebvshaus}) is strict.

Now let $G=(V(G),E(G))$ be a simple graph. The notation $x\sim y$ indicates that the vertices $x,y\in V(G)$ are adjacent, and $\Delta(G)$ denotes the maximum vertex degree of $G$. We denote by $\overline{G}$ the {\it complement} of $G$, i.e. the graph on the same vertex set and with two vertices being adjacent whenever they are not adjacent in $G$. Connected components of $\overline{G}$ are called {\it co-connected components} of $G$. A graph is biconnected, if there is no vertex or edge (called {\it a bridge}), whose removal makes it disconnected.  

A graph $H$ is a \emph{subgraph} of $G$, if $V(H) \subseteq V(G)$ and $E(H) \subseteq E(G)$. A subgraph $G[U]$ is {\it induced} by a vertex subset $U\subseteq V(G)$, if it contains all edges with both endpoints in $U$. The complete graph, the chordless path and the chordless cycle on $n$ vertices are denoted by $K_n$, $P_n$ and $C_n$, respectively. A \emph{star} $K_{1, n}$ is the graph on $n+1$ vertices with one vertex of the degree $n$ and $n$ vertices of the degree 1. 

A \emph{clique} of $G$ is a set of pairwise adjacent vertices. A {\it clique number} $\omega(G)$ is the number of vertices in the largest clique of $G$. The family of cliques $\mathcal{C} = (C_1,...,C_m)$ of $G$ is a {\it clique cover}, if every edge $uv\in E(G)$ is contained in at least one clique from $\mathcal{C}$. The subgraphs forming the cover are referred to as its {\it clusters}. A cover $\mathcal{C}$ is {\it $k$-cover}, if every vertex $v\in V(G)$ belongs to at most $k$ clusters. A cluster $C\in \mathcal{C}$ {\it separates} vertices $u,v\in V(G)$, if $|C\cap \{u,v\}| = 1$. A cover is {\it separating}, if every two distinct vertices are separated by some cluster. 

Now consider a hypergraph $\mathcal{H} = (\mathcal{V}(\mathcal{H}),\mathcal{E}(\mathcal{H}))$ (i.e. a finite set $\mathcal{V}(\mathcal{H})$ together with a family of its subsets $\mathcal{E}(\mathcal{H})$ called {\it edges}). Simple graphs are special cases of hypergraphs. The  {\it rank} $r(\mathcal{H})$ is the maximal size of an edge of $\mathcal{H}$. A hypergraph $\mathcal{H}$ is {\it strongly $k$-colorable}, if one can assign colors from the set $\{1,...,k\}$ to its vertices in such a way that vertices of every edge receive different colors. The vertices of the same color form a {\it color class}. Strongly $2$-colorable simple graphs are called {\it bipartite}.  The {\it edge $k$-coloring} and {\it edge color classes} of a hypergraph are defined analogously, with the condition that the edges that share a vertex receive different colors. {\it Chromatic number} $\chi(H)$ and {\it edge chromatic number} $\chi'(H)$ are minimal numbers of colors required to color vertices and edges of a hypergraph, respectively.

Intersection graph $L = L(\mathcal{H})$ of a hypergraph $\mathcal{H}$ is a simple graph with a vertex set $V(L) = \{v_E : E\in \mathcal{E}(\mathcal{H})\}$ in a bijective correspondence with the edge set of $\mathcal{H}$ and two distinct vertices $v_E,v_F\in V(L)$ being adjacent, if and only if $E\cap F \ne \emptyset$. The following theorem establishes a connection between intersection graphs and clique $k$-covers:

\begin{theorem}\cite{berge1984hypergraphs}\label{thm:covervsinter}
	A graph $G$ is an intersection graph of a hypergraph of rank $\leq k$ if and only if it has a clique $k$-cover.
\end{theorem}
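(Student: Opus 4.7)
The plan is to prove both implications by explicit reciprocal constructions between clique $k$-covers of $G$ and rank-$\le k$ hypergraphs whose intersection graph is $G$. In each direction, the construction is natural: a hypergraph vertex plays the role of a ``witness'' for a clique, and conversely each clique of the cover becomes the star of some hypergraph vertex.

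For the forward implication ($L(\mathcal{H})\cong G$ with $r(\mathcal{H}) \le k$ implies a clique $k$-cover), I would set, for every $x \in \mathcal{V}(\mathcal{H})$,
\[
C_x = \{v_E \in V(L(\mathcal{H})) : x \in E\},
\]
and transport this family along the given isomorphism. I would then verify three things: first, each $C_x$ is a clique, because any two edges of $\mathcal{H}$ containing $x$ share $x$ and hence correspond to adjacent vertices of $L(\mathcal{H})$; second, every edge of $G$ is covered, since $v_E \sim v_F$ forces $E \cap F \ne \emptyset$ and any $x$ in this intersection places both endpoints in $C_x$; third, the cover is a $k$-cover, because the number of cliques $C_x$ containing $v_E$ is exactly $|E| \le k$. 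These checks are short.

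For the reverse implication, given a clique $k$-cover $\mathcal{C} = (C_1,\dots,C_m)$, I would form a hypergraph with $\mathcal{V}(\mathcal{H}) = \{1,\dots,m\}$ and, for each $v \in V(G)$, an edge $E_v = \{i : v \in C_i\}$. The rank bound $r(\mathcal{H}) \le k$ is immediate from the $k$-cover hypothesis. The isomorphism $G \cong L(\mathcal{H})$ via $v \leftrightarrow v_{E_v}$ would then follow by showing that $E_u \cap E_v \ne \emptyset$ is equivalent to the existence of a cluster $C_i$ containing both $u$ and $v$, which by the clique-cover property is equivalent to $uv \in E(G)$.

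The main subtlety I expect is whether the correspondence $v \mapsto v_{E_v}$ is a genuine bijection: two vertices contained in exactly the same subfamily of clusters would yield the same edge and collapse to a single vertex of $L(\mathcal{H})$. Under the paper's convention that $\mathcal{E}(\mathcal{H})$ is a \emph{family} of subsets, so that edges carry labels and $V(L(\mathcal{H}))$ is in bijection with this labeled family, repetitions are harmless and the proof concludes cleanly. If one insisted instead on distinct set-edges, the fix would be to enlarge $\mathcal{V}(\mathcal{H})$ by a private dummy vertex appended to $E_v$ for each $v$ with $|E_v| < k$, which preserves the rank bound; the residual case where $|E_v|=k$ holds for twin vertices would need a short refinement argument on $\mathcal{C}$ itself, and this is the only place where real case analysis enters.
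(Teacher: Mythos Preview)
The paper does not supply its own proof of this statement: Theorem~\ref{thm:covervsinter} is quoted from Berge \cite{berge1984hypergraphs} and used as a black box. Your argument is the standard one for this classical equivalence, and it is correct. The two reciprocal constructions you describe (stars $C_x$ of hypergraph vertices giving a clique $k$-cover, and cluster-membership sets $E_v$ giving edges of a rank-$\le k$ hypergraph) are exactly what one finds in Berge's treatment, and your verification of the three properties in each direction is complete.

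Your handling of the one genuine subtlety---possible coincidence $E_u = E_v$ for distinct $u,v$---is also appropriate. The paper explicitly allows $\mathcal{E}(\mathcal{H})$ to be a \emph{family} and defines $V(L(\mathcal{H}))$ to be in bijective correspondence with that family, so multiple edges are permitted and your first resolution suffices without the dummy-vertex patch. (Note also that when $E_u = E_v \ne \emptyset$, the vertices $u,v$ share a clique and hence are adjacent, so the corresponding parallel hyperedges do intersect as required; the only potential issue is $E_u = E_v = \emptyset$, which can occur only for isolated vertices, and these can be handled by singleton hyperedges on fresh vertices.)
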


{\it Rank dimension} \cite{metelsky2003} $dim_R(G)$ of a graph $G$ is the minimal $k$ such that $G$ satisfies Theorem \ref{thm:covervsinter}.  In particular, graphs with $dim_R(G)=1$ are disjoint unions of cliques (such graphs are called {\it equivalence graphs} \cite{alon1986covering} or {\it $M$-graphs} \cite{tyshkevich1989matr}).


{\it Categorical product} of graphs $G_1$ and $G_2$ is the graph $G_1 \times G_2$ with the vertex set $V(G_1\times G_2) = V(G_1)\times V(G_2)$ with two vertices $(u_1,u_2)$ and $(v_1,v_2)$ being adjacent whenever $u_1v_1\in E(G_1)$ and $u_2v_2\in E(G_2)$. {\it Product dimension} (or {\it Prague dimension} or {\it or Ne{\v s}et{\v r}il-R{\"o}dl} in different sources) $dim_P(G)$ is the minimal integer $d$ such that $G$ is an induced subgraph of a categorical product of $d$ complete graphs \cite{hell2004graphs}.

{\it Equivalent $k$-cover} of the graph $G$ is a cover of its edges by equivalence graphs. It can be equivalently defined as a clique cover $\mathcal{C}$ such that the hypergraph $\mathcal{H}(\mathcal{C}) = (V(G),\mathcal{C})$ is edge $k$-colorable. Relations between product dimension, clique covers and intersection graphs are described by the following theorem that comprises results obtained in several prior studies:

\begin{theorem}\cite{hell2004graphs,babaits1996kmern}\label{thm:pdimcharact}
	The following statements are equivalent:
	
	1) $dim_P(\overline{G}) \leq k$;
	
	2) there exists a separating equivalent $k$-cover of $G$;
	
	3) $G$ is an intersection graph of strongly $k$-colorable hypergraph without multiple edges;
    
    4) there exists an injective mapping $\phi: V(G) \rightarrow \mathbb{N}^k$, $v \mapsto (\phi_1(v),\dots,\phi_k(v))$ such that $uv \in E(G)$ whenever $\phi_j(u) = \phi_j(v)$ for some $j\in \{1,...,k\}$.
	
\end{theorem}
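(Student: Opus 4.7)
The plan is to establish the equivalences via the cyclic chain (1) $\Rightarrow$ (4) $\Rightarrow$ (2) $\Rightarrow$ (3) $\Rightarrow$ (1), using Theorem \ref{thm:covervsinter} for the passage into (3) and otherwise making direct translations between the four formalisms. The guiding observation is that the $k$ coordinates of the mapping in (4), the $k$ colors of the edge coloring in (2), and the $k$ color classes of the strong vertex coloring in (3) are three manifestations of the same $k$-fold decomposition, and the proof amounts to making this correspondence explicit in each direction.

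For (1) $\Rightarrow$ (4), I would unpack definitions: if $\overline{G}$ is an induced subgraph of $K_{n_1}\times\cdots\times K_{n_k}$, the embedding is an injective $\phi:V(G)\to\mathbb{N}^k$, and because in the categorical product of complete graphs two tuples are adjacent iff they differ in every coordinate, complementing inside $G$ yields exactly the coordinate-matching criterion of (4). For (4) $\Rightarrow$ (2), I would slice by coordinates: for each $j\in\{1,\dots,k\}$ and each $a\in \phi_j(V(G))$, set $C_{j,a}=\phi_j^{-1}(a)$. Condition (4) makes each $C_{j,a}$ a clique; together they cover every edge of $G$; clusters sharing the coordinate index $j$ are vertex-disjoint, so coloring each $C_{j,a}$ by its coordinate index gives a proper edge $k$-coloring of $\mathcal{H}(\mathcal{C})$; and injectivity of $\phi$ forces separation, since distinct $u,v$ disagree on some coordinate $j$, and then $C_{j,\phi_j(u)}$ contains $u$ but not $v$.

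For (2) $\Rightarrow$ (3), Theorem \ref{thm:covervsinter} already produces a rank-$k$ hypergraph $\mathcal{H}$ whose intersection graph is $G$: its vertices are the clusters of $\mathcal{C}$, and each $v\in V(G)$ corresponds to the edge $E_v=\{C\in \mathcal{C}:v\in C\}$. Under this dual picture, an edge $k$-coloring of $\mathcal{H}(\mathcal{C})$ is exactly a $k$-coloring of the vertices of $\mathcal{H}$ that is proper on every $E_v$, i.e.\ a strong $k$-coloring, and the separating property is exactly the condition $E_u\ne E_v$ for $u\ne v$, i.e.\ absence of multiple edges. For (3) $\Rightarrow$ (1), let $V_1,\dots,V_k$ be the color classes of a strong $k$-coloring and define $\phi_j(v)$ to be the unique element of $E_v\cap V_j$ when this intersection is nonempty, and a fresh symbol private to the pair $(v,j)$ otherwise. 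Absence of multiple edges gives injectivity of $\phi$, and any coordinate agreement $\phi_j(u)=\phi_j(v)$ must arise from a genuine shared element of $E_u\cap E_v$, which forces $uv\in E(G)$; this exhibits $\overline{G}$ as an induced subgraph of the categorical product of complete graphs on the coordinate value sets, so $dim_P(\overline{G})\le k$.

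The part that I expect to require the most care is the padding in (3) $\Rightarrow$ (1): when some $E_v$ misses a color class $V_j$, the value $\phi_j(v)$ must be filled with a symbol that cannot collide with anything else, since a spurious collision would create an unwanted edge in $G$ or destroy injectivity; choosing symbols private to the pair $(v,j)$ addresses this but needs to be verified explicitly in both directions (injectivity of $\phi$, and the implication from coordinate agreement to adjacency). Once this bookkeeping is in place, the rest of the argument is a systematic unpacking of the definitions of categorical product, intersection graph, and strong versus edge coloring.
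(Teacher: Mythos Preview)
Your proof is correct, but note that the paper does not actually supply its own proof of this theorem: it is stated as a known result, attributed to \cite{hell2004graphs,babaits1996kmern}, and used without further justification. So there is no ``paper's proof'' to compare against.

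That said, your cyclic argument (1) $\Rightarrow$ (4) $\Rightarrow$ (2) $\Rightarrow$ (3) $\Rightarrow$ (1) is sound and is essentially the standard route. Two small points worth making explicit in a final write-up: first, in (2) $\Rightarrow$ (3) you silently use that an equivalent $k$-cover is in particular a clique $k$-cover (so that Theorem~\ref{thm:covervsinter} applies); this follows because clusters containing a fixed vertex must receive pairwise distinct colors in the edge $k$-coloring of $\mathcal{H}(\mathcal{C})$, hence there are at most $k$ of them. Second, the ``whenever'' in statement (4) must be read as ``if and only if'' (as you correctly do), since otherwise (4) would not force edges of $G$ to be witnessed by coordinate collisions and the equivalence with (1) would fail; this is the intended reading and is consistent with how the paper uses (4) later in Section~\ref{sec:inf}. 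Your handling of the padding in (3) $\Rightarrow$ (1) with symbols private to each pair $(v,j)$ is exactly the right device and needs no further elaboration.
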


\section{Lebesgue dimension of graphs}\label{lebgraph}

Lebesgue dimension of a metric space is defined through $k$-covers by sets of arbitrary small diameter. It is natural to transfer this definition to graphs using graph $k$-covers by subgraphs of smallest possible diameter, i.e. by cliques. Thus in light of Theorem \ref{thm:covervsinter} we define Lebesgue dimension of a graph through its rank dimension: 

\begin{equation}\label{kdimeqleb}
dim_L(G) = dim_R(G)-1.
\end{equation}

\noindent
An analogy between Lebesgue dimension of a metric space and rank dimension of a graph is reinforced by Theorem \ref{topspacehyper}. This theorem basically extends the analogy from graph theory back to general topology by stating that any compact metric spaces of bounded Lebesgue measure could be approximated by intersection graphs of (infinite) hypergraphs of bounded rank. To prove it, we will use the following fact:

\begin{lemma}\label{lebnumber}\cite{edgar2007measure}
Let $X$ be a compact metric space and $\mathcal{U}$ be its open cover. Then there exists $\delta>0$ (called a {\it Lebesgue number} of $\mathcal{U}$) such that for every subset $A\subseteq X$ with $diam(A) < \delta$ there is a set $U\in \mathcal{U}$ such that $A\subseteq U$.
\end{lemma}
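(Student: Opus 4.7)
The plan is to prove the Lebesgue number lemma by the standard compactness argument. First, for each $x \in X$ I would pick some $U_x \in \mathcal{U}$ containing $x$, which exists since $\mathcal{U}$ is a cover. Because $U_x$ is open and contains $x$, there is a radius $r_x > 0$ with $B(x, 2r_x) \subseteq U_x$ (the doubling here is the key trick and is what will make the triangle inequality close at the end). The collection $\{B(x, r_x) : x \in X\}$ is an open cover of $X$, so by compactness I can extract a finite subcover $B(x_1, r_{x_1}), \dots, B(x_n, r_{x_n})$.

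The natural candidate is then $\delta := \min\{r_{x_1}, \dots, r_{x_n}\}$, which is strictly positive as a minimum of finitely many positive numbers. To verify that $\delta$ is a Lebesgue number, I would take an arbitrary $A \subseteq X$ with $\mathrm{diam}(A) < \delta$; the empty case is trivial, so fix some $a \in A$. By the finite subcover property, $a$ lies in some $B(x_i, r_{x_i})$. For any other $b \in A$ the triangle inequality gives $d(b, x_i) \leq d(b, a) + d(a, x_i) < \delta + r_{x_i} \leq 2 r_{x_i}$, so $b \in B(x_i, 2 r_{x_i}) \subseteq U_{x_i}$. Hence $A \subseteq U_{x_i}$, as required.

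There is no real obstacle here since this is a classical result; the only subtle step is recognizing in advance that one must build in a factor of $2$ when choosing $r_x$, so that a set $A$ of diameter less than $\delta$ anchored at a point inside $B(x_i, r_{x_i})$ still fits inside the enlarged ball $B(x_i, 2 r_{x_i})$, which in turn lies in the element $U_{x_i}$ of the cover. The rest is routine verification.
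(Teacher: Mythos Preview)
Your proof is correct and is the standard compactness argument for the Lebesgue number lemma. Note that the paper does not actually prove this lemma: it is stated with a citation to \cite{edgar2007measure} and used as a black box in the proof of Theorem~\ref{topspacehyper}, so there is no in-paper argument to compare against. Your write-up would serve perfectly well as a self-contained justification should one be desired.
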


\begin{theorem}\label{topspacehyper}
	Let $X$ be a compact metric space with a metric $\rho$. Then $dim_L(X) \leq k-1$ if and only if for any $\epsilon > 0$ there exists a number $0 < \delta < \epsilon$ and a hypergraph $\mathcal{H}(\epsilon)$ on a finite vertex set $V(\mathcal{H}(\epsilon))$ with an edge set $E(\mathcal{H}(\epsilon)) = \{e_x : x\in X\}$,  which satisfies the following conditions: 
	
	1) $rank(\mathcal{H}(\epsilon)) \leq k$;
	
	2) $e_x\cap e_y \ne \emptyset$ for every $x,y\in X$ such that $\rho(x,y) < \delta$;
	
	3) $\rho(x,y) < \epsilon$ for every $x,y\in X$ such that $e_x \cap e_y \ne \emptyset$;
	
	4) for every $v\in V(\mathcal{H}(\epsilon))$ the set $X_v = \{x\in X : v\in e_x \}$ is open.
\end{theorem}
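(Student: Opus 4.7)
The plan is to establish both directions via a concrete correspondence: the hypergraph vertices will correspond to sets of an open cover of $X$, and the edge $e_x$ assigned to a point $x$ will encode which cover sets contain $x$. Under this bijection, a $k$-cover of diameter bounded by $\epsilon$ in the sense of the Lebesgue dimension translates directly into a rank-$k$ hypergraph satisfying conditions (1)--(4), and conversely.

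For the forward direction, I would assume $dim_L(X) \leq k-1$ and fix $\epsilon > 0$. First I would pick $\epsilon' \in (0,\epsilon)$ and invoke the definition of $dim_L$ to obtain an $(\epsilon',k)$-cover $\mathcal{U}$ of $X$ by open sets, which compactness allows to be taken finite. Then I would set $V(\mathcal{H}(\epsilon)) = \mathcal{U}$ and, for each $x \in X$, define $e_x = \{U \in \mathcal{U} : x \in U\}$. Condition (1) is immediate because $\mathcal{U}$ is a $k$-cover; condition (3) follows from $diam(U) \leq \epsilon' < \epsilon$; and condition (4) holds because $X_U = U$ is open by construction. For condition (2), I would apply Lemma \ref{lebnumber} to obtain a Lebesgue number $\delta_0 > 0$ for $\mathcal{U}$, then take $\delta = \min(\delta_0,\epsilon')/2$: any pair $x,y$ with $\rho(x,y) < \delta$ then lies inside a common $U \in \mathcal{U}$, giving $U \in e_x \cap e_y$.

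For the reverse direction, given the hypergraph $\mathcal{H}(\epsilon)$ for each $\epsilon > 0$, I would define the candidate cover $\mathcal{C}(\epsilon) = \{X_v : v \in V(\mathcal{H}(\epsilon))\}$ and verify it is an $(\epsilon,k)$-cover. Openness of each $X_v$ is exactly (4); applying (2) with $y=x$ yields $e_x \neq \emptyset$, so any $v \in e_x$ satisfies $x \in X_v$, meaning $\mathcal{C}(\epsilon)$ covers $X$; the $k$-cover property follows because $x \in X_v$ if and only if $v \in e_x$, so the count of covering sets equals $|e_x| \leq k$ by (1); and the diameter bound $diam(X_v) \leq \epsilon$ comes from (3), since $x,y \in X_v$ forces $v \in e_x \cap e_y$ and hence $\rho(x,y) < \epsilon$. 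Since $\epsilon$ was arbitrary, this yields $dim_L(X) \leq k-1$.

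The only delicate point I anticipate is the interplay between strict and non-strict inequalities: the Lebesgue number produced by Lemma \ref{lebnumber} carries no a priori relation to $\epsilon$, and condition (3) demands the strict inequality $\rho(x,y) < \epsilon$ rather than the $\leq \epsilon$ that a raw $\epsilon$-cover provides. Both issues are handled by doing the construction with an auxiliary $\epsilon' < \epsilon$ and shrinking the Lebesgue number correspondingly, as sketched above; beyond this bookkeeping, the argument is essentially a translation of definitions between the topological and hypergraph sides.
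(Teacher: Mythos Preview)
Your proposal is correct and follows essentially the same approach as the paper: in both directions the vertices of the hypergraph are identified with the members of a finite open cover, and $e_x$ records which cover sets contain $x$. You are in fact a bit more careful than the paper, which takes $\delta$ to be the raw Lebesgue number and writes $diam(C_j) < \epsilon$ without justification; your auxiliary $\epsilon' < \epsilon$ and the shrunken $\delta$ cleanly secure both the strict inequality in condition~(3) and the requirement $\delta < \epsilon$.
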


\begin{proof}

Suppose that $dim_L(X) \leq k$, $\epsilon > 0$ and let $\mathcal{C}$ be the corresponding $(\epsilon,k)$-cover of $X$. Since $X$ is compact, we can assume that a cover $\mathcal{C}$ is finite, i.e. $\mathcal{C} = \{C_1,...,C_m\}$. Let $\delta$ be the Lebesgue number of $\mathcal{C}$.

For a point $x\in X$ let $e_x = \{i\in [m] : x\in C_i\}$. Consider a hypergraph $\mathcal{H}$ with $V(\mathcal{H}) = [m]$ and $E(\mathcal{H}) = \{e_x : x\in X\}$. Then $\mathcal{H}$ satisfies conditions 1)-4). Indeed, $rank(\mathcal{H}) \leq k$, since $\mathcal{C}$ is $k$-cover. If $\rho(x,y) < \delta$, then by Lemma \ref{lebnumber} there is $i\in [m]$ such that $\{x,y\}\in C_i$, i.e. $i\in e_x\cap e_y$. Condition $j\in e_x\cap e_y$ means that $x,y\in C_j$,  and so $\rho(x,y) < \epsilon$, since $diam(C_j) < \epsilon$. Finally, for every $i\in V(\mathcal{H})$ we have $X_v = C_v$, and thus $X_v$ is open.

Conversely, let $\mathcal{H}$ be a hypergraph with $V(\mathcal{H}) = [m]$ satisfying conditions (1)-(4). Then it is straightforward to check that $\mathcal{C} = \{X_1,...,X_m\}$ is an open $(\epsilon,k)$-cover of $X$.
\end{proof}

 So, $dim_L(X)\leq k$ whenever for any $\epsilon > 0$ there is a hypergraph $\mathcal{H}(\epsilon)$ of $rank(\mathcal{H}(\epsilon)) \leq k$ with edges in bijective correspondence with points of $X$ such that two points are close if and only if corresponding edges intersect. 

Clique cover consisting of all edges of $G$ is a $\Delta(G)$-cover. It implies the following upper bound for $dim_L(G)$:

\begin{proposition}\label{prop:dimLDelta}
$dim_L(G)  \leq \Delta(G) - 1$. The equality holds, if $G$ is triangle-free.
\end{proposition}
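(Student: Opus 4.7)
The plan is to reduce everything to the defining identity $dim_L(G) = dim_R(G)-1$ from equation (\ref{kdimeqleb}), and then argue about clique $k$-covers directly. The upper bound $dim_L(G) \leq \Delta(G)-1$ is equivalent to exhibiting a clique $\Delta(G)$-cover of $G$, and the equality for triangle-free graphs will follow because in that case cliques are forced to be small, so many distinct clusters must meet at a single vertex.

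For the upper bound, I would take the most obvious candidate: the cover $\mathcal{C} = \{\{u,v\} : uv \in E(G)\}$ consisting of all edges of $G$, each viewed as a 2-clique (augmented by singleton cliques for isolated vertices, if needed, though these do not affect the count). This trivially covers every edge, and each vertex $v$ belongs to exactly $\deg(v)$ of these clusters, hence to at most $\Delta(G)$ of them. Therefore $\mathcal{C}$ is a clique $\Delta(G)$-cover, so $dim_R(G) \leq \Delta(G)$ and the stated inequality follows from (\ref{kdimeqleb}).

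For the equality claim, assume $G$ is triangle-free and let $\mathcal{C}$ be any clique $k$-cover of $G$. Triangle-freeness forces $\omega(G) \leq 2$, so every cluster $C \in \mathcal{C}$ has $|C| \leq 2$ and consequently covers at most one edge of $G$. Now pick a vertex $v$ of maximum degree $\Delta(G)$. Each of the $\Delta(G)$ edges incident to $v$ must lie in some cluster of $\mathcal{C}$; by the previous sentence, these clusters are pairwise distinct, and each of them contains $v$. Therefore $v$ belongs to at least $\Delta(G)$ clusters, forcing $k \geq \Delta(G)$. Minimizing over $k$ yields $dim_R(G) \geq \Delta(G)$, which combined with the upper bound gives $dim_R(G) = \Delta(G)$ and hence $dim_L(G) = \Delta(G)-1$.

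There is essentially no hard step here: the only subtlety is the bookkeeping observation that in a triangle-free graph distinct edges at a common vertex cannot be packed into the same clique of the cover, which is exactly what prevents the cover from being more efficient than the trivial edge cover.
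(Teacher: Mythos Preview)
Your proof is correct and matches the paper's approach: the paper simply notes that the cover by all edges is a $\Delta(G)$-cover (giving the upper bound), and leaves the triangle-free equality implicit, for which your argument---that cliques of size at most two force $\Delta(G)$ distinct clusters through a maximum-degree vertex---is exactly the intended justification.
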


\section{Hausdorff dimension of graphs}\label{measuregraph}

The goal of this section is to demonstrate that the complement product dimension is a graph-theoretical analogue of the Hausdorff dimension. First, we establish a formal connection by proving that the complement product dimension is associated with a graph measure analogous to the Hausdorff measure of topological spaces. Second, we demonstrate how this dimension is related to graph self-similarity.

\subsubsection*{Graph Measure}
In order to rigorously define a graph analogue of Hausdorff dimension, we need to define the corresponding measure first. Note that in any meaningful finite graph topology every set is a Borel set. As mentioned above, for measurable Borel sets in $\mathbb{R}^n$ Jordan, Lebesgue and Hausdorff measures are equivalent.  Thus further we will consider the graph analogue of Jordan measure. We propose a parameter which is aimed to serve as the graph analogue of the Jordan measure, and prove that it indeed satisfies the axioms of measure. Finally, based on this parameter we define the Hausdorff dimension of a graph. 

It is known, that every graph $G$ is isomorphic to an induced subgraph $G'$ of a categorical product $K^1_{n_1}\times\dots \times K^d_{n_d}$ of complete graphs \cite{hell2004graphs}.  Without loss of generality we may assume that $n_1 = ... = n_d = n$, i.e. $G'$ is an induced subgraph of the graph $\mathbb{S}_d = (K_n)^d$. $\mathbb{S}_d$ will be referred to as {\it a space} of dimension $d$ and $G'$ as an {\it embedding} of $G$ into $\mathbb{S}_d$. After assuming that $V(K_n) = [n]$, we may say that every vertex $v\in \mathbb{S}_d$ is a vector $v = (v_1,...,v_d)\in [n]^d$, and two vertices $u$ and $v$ are adjacent if and only if $v_r\ne u_r$ for every $r\in [d]$.

{\it Hyper-rectangle} $R = R(J_1,...,J_d)$ is a subgraph of $\mathbb{S}_d$, that is defined as follows:  $R = K_n[J_1]\times\dots\times K_n[J_d]$, where for every $i\in [d]$ the set $J_i \subseteq [n]$ is non-empty. The {\it volume} of $R$ is naturally defined as $vol(R) = |V(R)| = \prod_{i=1}^d |J_i|$.

The family $\mathcal{R} = \{R^1,...,R^m\}$ of hyper-rectangles is a {\it rectangle co-cover} of $G'$, if the subgraphs $R^i$ are pairwise vertex-disjoint, $V(G')\subseteq \bigcup_{i=1}^m V(R^i)$ and $\mathcal{R}$ covers all non-edges of $G'$, i.e. for every pair of non-adjacent vertices $x,y\in V(G')$ there exists $j\in[m]$ such that $x,y\in V(R^j)$. We define {\it $d$- volume} of a graph $G$ as

\begin{equation}\label{grvol}
vol^d(G) = \min_{G'}\min_{\mathcal{R}} \sum_{R\in \mathcal{R}} vol(R),
\end{equation}

\noindent
where the first minimum is taken over all embeddings $G'$ of $G$ into $d$-dimensional spaces $\mathbb{S}_d$ and the second minimum - over all rectangle co-covers of $G'$. For example, Fig. \ref{fig:selfSim} (left) demonstrates that the 2-dimensional volume of the path $P_4$ is equal to 6. 


 \begin{figure}%
    \centering
    \subfloat{{\includegraphics[width=4cm]{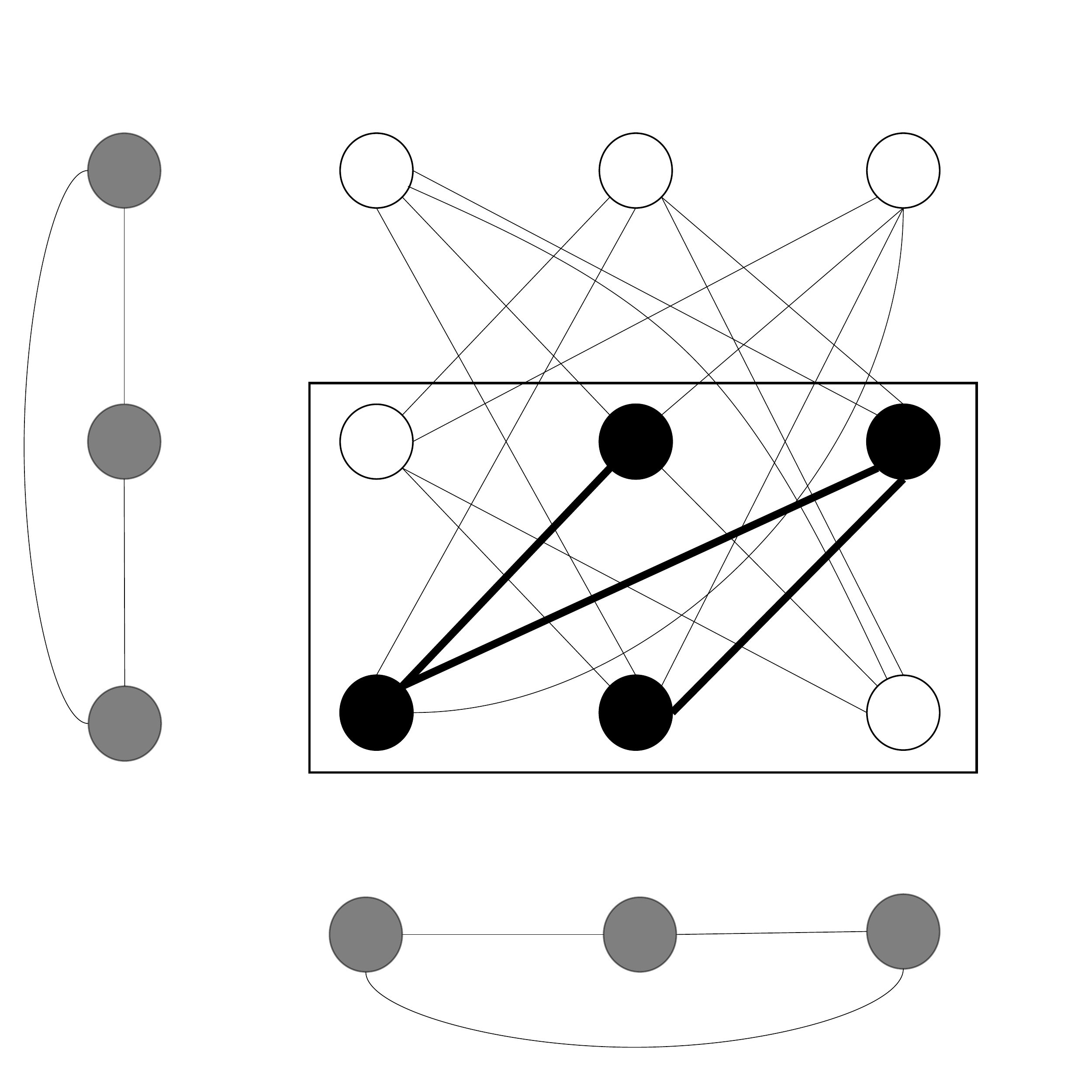} }}%
    \qquad
    \subfloat{{\includegraphics[width=10cm]{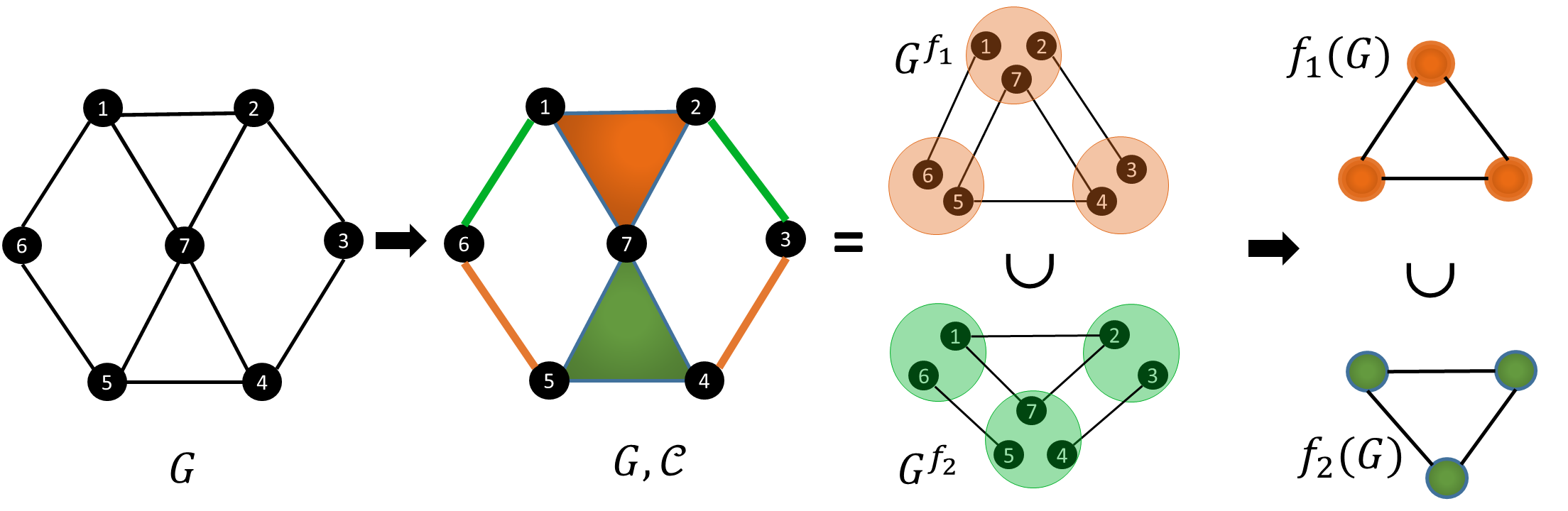} }}%
    \caption{\footnotesize {\bf Left}: Embedding of $G=P_4$ into a 2-dimensional space $\mathbb{S}_2=(K_3)^2$ and its rectangle co-cover by a hyper-rectangle of volume $6$. {\bf Right}: equivalent $k$-cover defining a self-similarity of a graph $G$. From left to right: the graph $G$; an equivalent 2-cover $\mathcal{C}$ of $G$ with the clusters of the same color highlighted in red and green; subgraphs $G^{f_1}$ and $G^{f_2}$ such that $G = G^{f_1}\cup G^{f_2}$ for the contracting family $(f_1,f_2)$ defined by $\mathcal{C}$; contractions $f_1(G)$ and $f_2(G)$}%
    \label{fig:selfSim}%
\end{figure}

Based on the definition of $d$-volume, we define a {\it $d$-measure} of a graph $F$ as follows:

\begin{equation}\label{neasuregraph}
\mathcal{H}^d(F) = \left \{
\begin{array}{lll}
vol^d(\overline{F}), \text{ if } \overline{F} \text{ has an embedding into } \mathbb{S}_d;
\\ +\infty, \text{ otherwise }
\end{array}
\right.
\end{equation}

The main theorem of this section confirms that $\mathcal{H}^d$ indeed satisfy the axioms of a measure: 

\begin{theorem}\label{measureadditivity}
Let $F_1$ and $F_2$ be two graphs, and $F^1\cup F^2$ is their disjoint union. Then 

\begin{equation}\label{eq:measaddit}
\mathcal{H}^d(F^1\cup F^2) = \mathcal{H}^d(F^1) + \mathcal{H}^d(F^2)
\end{equation}

\end{theorem}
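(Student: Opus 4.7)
The plan is to work through the complement. Since the complement of a disjoint union is the ``join'' --- that is, $\overline{F^1 \cup F^2}$ consists of all edges of $\overline{F^1}$ inside $V(F^1)$, all edges of $\overline{F^2}$ inside $V(F^2)$, and every edge between $V(F^1)$ and $V(F^2)$ --- the theorem reduces to showing $vol^d(\overline{F^1 \cup F^2}) = vol^d(\overline{F^1}) + vol^d(\overline{F^2})$, with matching $+\infty$ cases. The infinite-value compatibility is immediate: any embedding of $\overline{F^1 \cup F^2}$ into $\mathbb{S}_d$ restricts to embeddings of each of $\overline{F^1}$ and $\overline{F^2}$, while two given embeddings of $\overline{F^1}$ and $\overline{F^2}$ can be merged by relabelling the coordinate values of the second embedding to use a disjoint range in every coordinate. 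This makes every pair of vertices from different parts differ in every coordinate and hence adjacent in the ambient space, so the combined map is an embedding of the whole complement.

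For the upper bound I would use exactly that merging construction applied to optimal embeddings of $\overline{F^1}$ and $\overline{F^2}$ together with their optimal rectangle co-covers $\mathcal{R}^1$ and $\mathcal{R}^2$. Because the two embeddings sit on disjoint coordinate ranges, the rectangles of $\mathcal{R}^1$ and $\mathcal{R}^2$ remain pairwise vertex-disjoint in $\mathbb{S}_d$, together cover $V(F^1) \cup V(F^2)$, and cover every non-edge of $\overline{F^1 \cup F^2}$ (all of which lie inside $V(F^1)$ or inside $V(F^2)$, because every cross-part pair is an edge of the join). The total volume is exactly $vol^d(\overline{F^1}) + vol^d(\overline{F^2})$.

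For the lower bound, which is the main difficulty, I would start with an optimal embedding of $\overline{F^1 \cup F^2}$ into $\mathbb{S}_d$ and an optimal rectangle co-cover $\mathcal{R}$. The crucial structural fact is that every vertex of the image of $V(F^1)$ is adjacent to every vertex of the image of $V(F^2)$, so these two image sets are disjoint on every coordinate axis. For each $R = R(J_1,\dots,J_d) \in \mathcal{R}$ I let $J_r^i$ be the set of $r$-th coordinates of the vertices in $V(F^i) \cap V(R)$, so $J_r^1 \cap J_r^2 = \emptyset$ inside $J_r$; when $V(F^i) \cap V(R)$ is nonempty I set $R^i = R(J_1^i,\dots,J_d^i)$, otherwise I drop it. A routine verification then shows that $\mathcal{R}^i = \{R^i : R \in \mathcal{R}\}$ is a rectangle co-cover of the restricted embedding of $\overline{F^i}$: the pieces are pairwise vertex-disjoint because they sit inside the disjoint $R$'s; every vertex of $V(F^i) \cap V(R)$ lies in $V(R^i)$ by construction; and every non-edge of $\overline{F^i}$, being also a non-edge of $\overline{F^1 \cup F^2}$, is covered by some $R$ and therefore by the corresponding $R^i$.

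Summing the elementary inequality $\prod_{r=1}^d (a_r + b_r) \geq \prod_{r=1}^d a_r + \prod_{r=1}^d b_r$ for non-negative reals (immediate by expansion) gives $vol(R^1) + vol(R^2) \leq vol(R)$ rectangle by rectangle, with the convention $vol(R^i) = 0$ when $R^i$ was dropped. Summing over all $R \in \mathcal{R}$ yields $vol^d(\overline{F^1}) + vol^d(\overline{F^2}) \leq \sum_{R} vol(R) = vol^d(\overline{F^1 \cup F^2})$, completing the lower bound. I expect the main obstacle to be the careful bookkeeping confirming that the split pieces $R^i$ are genuine hyper-rectangles and that dropping the empty ones preserves the co-cover property; this is precisely where the disjoint-coordinate structure forced by the join is used essentially.
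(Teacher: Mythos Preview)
Your proof is correct, and your lower-bound argument is genuinely different from the paper's. The paper proceeds by first establishing a full structural description of \emph{any} minimal rectangle co-cover of an arbitrary embedding $G'$: it shows that in a minimal co-cover the rectangles have pairwise disjoint projections on every coordinate (Claim~\ref{dvolumeinterindex}), that the sets $V(G')\cap V(R^i)$ are exactly the co-connected components of $G'$ (Claim~\ref{dvolumecoconcomp}), and that each $R^i$ is the minimal rectangle containing its component (Claim~\ref{voljoin}); additivity over co-connected components (Claim~\ref{volconcomp}) then falls out, and the theorem is a special case. Your route bypasses all of this: you take an arbitrary optimal co-cover of the join, split each rectangle $R$ into $R^1,R^2$ by projecting onto the coordinates actually used by $V(F^1)\cap V(R)$ and $V(F^2)\cap V(R)$, and apply the elementary inequality $\prod_r(a_r+b_r)\ge \prod_r a_r+\prod_r b_r$ rectangle by rectangle. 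This is shorter and more self-contained for the stated theorem; the paper's approach, on the other hand, yields the stronger structural fact that minimal co-covers are in bijection with co-connected components, which it records separately (Claims~\ref{voljoin} and~\ref{cor:minvolspace}) and which your argument does not recover.
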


\begin{proof}
It can be shown \cite{babai1992linear}, that $\overline{F^1\cup F^2}$ can be embedded into a categorical product of $d$ complete graphs if and only if both $\overline{F_1}$ and $\overline{F_2}$ have such embeddings. Therefore the relation (\ref{eq:measaddit}) holds, if some of the terms are equal to $+\infty$. If all $\overline{F_1}$,$\overline{F_2}$, $\overline{F^1\cup F^2}$ can be embedded into $\mathbb{S}_d$, we proceed with the series of claims.

Let $W^1,W^2\subseteq V(\mathbb{S}_d)$. We write $W_1\sim W_2$, if every vertex from $W_1$ is adjacent to every vertex from $W_2$. Denote by $P_k(W_1)$ {\it the $k$-th projection} of $W_1$, i.e. the set of all $k$-coordinates of vertices of $W_1$: $P_k(W_1) = \{v_k : v\in W_1\}.$
In particular, $P_k(R(J_1,...,J_d)) = J_k$. The following claim follows directly from the definition of $\mathbb{S}_d$.

\begin{claim}\label{padjcoord}
$W^1 \sim W^2$ if and only if $P_k(W_1)\cap P_k(W_2) = \emptyset$ for every $k\in [d]$.
\end{claim}

Assume that $\mathcal{R} = \{R^1,...,R^m\}$ is a minimal rectangle co-cover of a minimal embedding $G'$, i.e.  $vol^d(G) = \sum_{R\in \mathcal{R}} vol(R)$.  We will demonstrate, that $\mathcal{R}$ has a rather simple structure. 

\begin{claim}\label{dvolumeinterindex}
Let $J_k^i = P_k(R^i)$. Then $J^i_k \cap J^j_k = \emptyset$ for every $i,j\in [m]$, $i\ne j$ and every $k\in [d]$ .
\end{claim}

\begin{proof}
First note, that for every hyper-rectangle $R^i = R(J^i_1,...,J^i_d)$, every coordinate $k\in [d]$ and every $l\in J^i_k$ there exists $v\in V(G')\cap V(R^i)$ such that $v_k = l$. Indeed, suppose that it does not hold for some $l\in J^i_k$. If $|J_k^i| = 1$, then it means that $V(G')\cap V(R^i) = \emptyset$. Thus, $\mathcal{R}' = \mathcal{R}\setminus \{R^i\}$ is a rectangle co-cover, which contradicts the minimality of $\mathcal{R}$. If $|J_k^i| > 1$,  consider a hyper-rectangle  $(R^i)' = R(J_i^1,...,J_k^i\setminus \{l\},...,J^i_d)$. The set $\mathcal{R}' = \mathcal{R}\setminus \{R^i\}\cup \{(R^i)'\}$  is a rectangle co-cover, and the $d$-volume of $(R^i)'$ is smaller than the $d$-volume of $R^i$. Again it contradicts minimality of $\mathcal{R}$.

Now assume that for some distinct $i,j\in [m]$ and $k\in [d]$ we have $J^i_k \cap J^j_k \supseteq \{l\}$. Then there exist $u\in V(G')\cap V(R^i)$ and $v\in V(G')\cap V(R^j)$ such that $u_k = v_k = l$. So, $uv\not\in E(G')$, and therefore by the definition $uv$ is covered by some $R^h\in \mathcal{R}$. The hyper-rectangle $R^h$ intersects both $R^i$ and $R^j$, which contradicts the definition of a rectangle co-cover.
\end{proof}

\begin{claim}\label{dvolumecoconcomp}
Let $U^i = V(G')\cap V(R^i)$, $i=1,...,m$. Then the set $U=\{U^1,...,U^m\}$ coincides with the set of co-connected components of $G'$.
\end{claim}
\begin{proof}
Claims \ref{padjcoord} and \ref{dvolumeinterindex} imply, that vertices of distinct hyper-rectangles from the co-cover $\mathcal{R}$ are pairwise adjacent. So, $U^i \sim U^j$ for every $i,j\in [m]$, $i\ne j$.

Let $\mathcal{C}=\{C^1,...,C^r\}$ be the set of co-connected components of $G'$ (thus $V(G') = \bigsqcup_{l=1}^r C^l = \bigsqcup_{i=1}^m U^i$). Consider a component $C^l\in \mathcal{C}$ and the sets $C^l_i = C^l \cap U^i$, $i=1,...,m$. We have $C^l = \bigsqcup_{i=1}^m C^l_i$ and $C^l_i \sim C^l_j$ for all $i\ne j$. Therefore, due to co-connectedness of $C^l$, exactly one of the sets $C^l_i$ is non-empty.

So, we have demonstrated, that every co-connected component $C^l$ is contained in some of the sets $U^i$. Now, let some $U_i$ consists of several components, i.e. without loss of generality $U_i = C^1\sqcup\dots\sqcup C^q$, $q\geq 2$. Let $I_k^j = P_k(C^j)$, $j=1,...,q$. By Proposition \ref{padjcoord} we have $I_k^{j_1} \cap I_k^{j_2} = \emptyset$ for all $j_1\ne j_2$, $k=1,...,d$.  Consider hyper-rectangles $R^{i,1} = R(I_1^1,...,I_d^1)$,...,$R^{i,q} = R(I_1^q,...,I_d^q)$. Those hyper-rectangles are pairwise vertex-disjoint, and $C^j \subseteq V(R^{i,j})$ for all $j\in [q]$. Since every pair of non-connected vertices of $G'$ is contained in some of its co-connected components, we arrived to the conclusion, that the set $\mathcal{R}' = \mathcal{R}\setminus \{R^i\} \cup \{R^{i,1},...,R^{i,q}\}$ is a rectangle co-cover. Moreover, $V(R^{i,1})\sqcup\dots\sqcup V(R^{i,q}) \subsetneq V(R^i)$, and therefore $\sum_{j=1}^q vol(R^{i,j}) < vol(R^i)$, which contradicts the minimality of $\mathcal{R}$.
\end{proof}

\begin{claim}\label{voljoin}
Let $\mathcal{C}=\{C^1,...,C^m\}$ be the set of co-connected components of $G'$. Then $R_i = R(P_1(C^i),...,P_d(C^i))\supseteq C^i$.
\end{claim}

\begin{proof}
By Claim \ref{dvolumecoconcomp}, $|\mathcal{R}| = |\mathcal{C}|$, and every component $C^i\in \mathcal{C}$ is contained in a unique hyper-rectangle $R^i\in \mathcal{R}$, $i=1,...,m$. Every pair of non-adjacent vertices of $G'$ belong to some of its co-connected components. This fact, together with the minimality of $\mathcal{R}$, implies that $R^i$ is the minimal hyper-rectangle that contains $C^i$. Thus $R_i = R(P_1(C^i),...,P_d(C^i))$.
\end{proof}

\begin{claim}\label{cor:minvolspace}
If $\overline{G}$ is connected, than $vol^d(G)$ is the minimal volume of all $d$-dimensional spaces $S$, where $G$ can be embedded. 
\end{claim}

\begin{claim}\label{volconcomp}
Let $\mathcal{D}=\{D^1,...,D^m\}$ be the set of co-connected components of $G$. Then $vol^d(G) = \sum_{i=1}^m vol^d(G[D^i])$.
\end{claim}
\begin{proof}
Suppose that $\{C^1,...,C^m\}$ is the set of co-connected components of $G'$, and $G[D^i]\cong G'[C^i]$. Claims \ref{dvolumecoconcomp} and \ref{voljoin} imply that $\{R_i\}$ is a rectangle co-cover of an embedding of $G[D^i]$.  Therefore we have $vol(R^i) \geq vol^d(G[D^i])$ and thus $vol^d(G) = \sum_{i=1}^m vol(R^i) \geq \sum_{i=1}^m vol^d(G[D^i])$. So, it remains to prove the inverse inequality.

Let $G'^i$ be a minimal embedding of $G[D^i]$ into $\mathbb{S}_d$.  By Claim \ref{dvolumecoconcomp}, every minimal hyper-rectangle co-cover of $G'^i$ consists of a single hyper-rectangle $R^i = R(J^i_1,...,J^i_d)$ or, in other words, $G[D^i]$ is embedded into $R^i$. Now we can construct an embedding $G'$ of $G$ into $S=(K_{nm})^d$ and its hyper-rectangle co-cover. It can be done as follows. Consider $I^i_k =  \{(i-1)m + l : l\in J^i_k\}$, $k=1,...,d$, and let $Q^i = K_{mn}(I^i_1)\times\dots\times K_{mn}(I^i_d)$. Obviously, $Q^i \cong R^i$. Now embed $G[D^i]$ into $Q^i$. Let $G'^i$ be those embeddings. By Claim \ref{padjcoord}, $V(G'^i) \sim V(G'^j)$, so $G' = G'^1\cup...\cup G'^m$ is indeed an embedding of $G$.

All hyper-rectangles $Q^i$ are pairwise disjoint. Since every pair of non-adjacent vertices belong to some subgraph $G[D^i]$, we have that $\mathcal{Q} = \{Q^1,...,Q^m\}$ is hyper-rectangle co-cover of $G'$. Therefore $vol^d(G) \leq \sum_{i=1}^m vol(Q^i) = \sum_{i=1}^m vol^d(G[D^i])$.
\end{proof}

Now the equality (\ref{eq:measaddit}) follows from Corollary \ref{volconcomp}. In concludes the proof of Theorem.
\end{proof}

Following the analogy with Hausdorff dimension of topological spaces (\ref{hdimtop}), we define a {\it Hausdorff dimension} of a graph $G$ as 

\begin{equation}\label{hdimgraph}
dim_H(G) = \min\{s \geq 0 :\mathcal{H}^s(G) < \infty\}-1.
\end{equation}

\noindent
Thus, Hausdorff dimension of a graph can be identified with a Prague dimension of its complement. 

According to Theorem \ref{thm:pdimcharact}, graph Hausdorff dimension is defined by the existence of a separating equivalent clique cover. In a typical case the coloring requirement is more important than separation requirement. Indeed, two vertices may not be separated by some cluster of a given clique cover only if these two vertices are {\it true twins}, i.e. they have the same closed neighborhoods. In most network models and experimental networks presence of such vertices in highly unlikely; besides in most situations they can be collapsed into a single vertex without changing the majority of important network topological properties. 

\subsubsection*{Self-similarity}

The self-similarity of compact metric space $(X,d)$ is defined using the notion of a {\it contraction} \cite{edgar2007measure}. An open mapping $f:X\rightarrow X$ is a {\it similarity mapping}, if $d(f(u),f(v)) \leq \alpha d(u,v)$ for all $u,v\in X$, where $\alpha$ is called its {\it similarity ratio} (such mapping is obviously continuous). If $\alpha < 1$, then it is a {\it contraction}. The space $X$ is {\it self-similar}, if there exists a family of contractions $f_1,...,f_k$ such that $X = \bigcup_{i=1}^k f_i(X)$.

This definition cannot be directly applied to discrete metric spaces such as graphs, since for them contractions in the strict sense do not exist. To formally and rigorously define the self-similarity of graphs, we proceed as follows. It is convenient to assume that every vertex is adjacent to itself. For two graphs $G$ and $H$, a {\it homomorphism} \cite{hell2004graphs} is a mapping $f:V(G)\rightarrow V(H)$ which maps adjacent vertices to adjacent vertices, i.e. $f(u)f(v)\in E(H)$ for every $uv\in E(G)$. A homomorphism $f$ is a {\it similarity mapping}, if inverse images of adjacent vertices are also adjacent, i.e. $uv\in E(G)$ whenever $f(u)f(v)\in E(H)$ (it is possible that $f(u)=f(v)$). In other words, for a similarity mapping, images and inverse images of cliques are cliques. With a similarity mapping $f$ we can associate a subgraph $G^f$ of $G$, which is formed by all edges $uv$ such that $f(u)\ne f(v)$ (Fig. \ref{fig:selfSim}).

A family of graph similarity mappings $f_i:V(G) \rightarrow V(G_i)$, $i=1,...,k$, is a {\it contracting family}, if every edge of $G$ is contracted by some mapping, i.e. for every $uv\in E(G)$ there exists $i\in\{1,...,k\}$ such that $f_i(u) = f_i(v)$. The graphs $G_i$ are {\it contractions} of $G$. Finally, a graph $G$ is {\it self-similar}, if $G = \bigcup_{i=1}^k G^{f_i}$ (Fig. \ref{fig:selfSim}).

\begin{proposition}\label{prop:contrdim}
Graph $G$ is self-similar with a contracting family $f_1,...,f_k$ if and only if there is an equivalent separating $k$-cover of $G$.
\end{proposition}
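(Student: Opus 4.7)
The plan is to exhibit the equivalent separating $k$-cover and the contracting family as two encodings of the same data, namely a list of $k$ partitions of $V(G)$ into cliques: on the cover side the partitions are the color classes (completed by singletons), while on the mapping side they are the fiber systems of the $f_i$.

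For the $(\Rightarrow)$ direction I would start from a contracting family $f_1,\ldots,f_k$ and take, as the $i$-th color class, $\mathcal{C}_i=\{f_i^{-1}(w):w\in f_i(V(G))\}$. The similarity condition, applied in the self-loop case $f_i(u)=f_i(v)$, forces every fiber to induce a clique in $G$, so each $\mathcal{C}_i$ is an equivalence graph and $\mathcal{C}=\bigcup_i\mathcal{C}_i$ is a clique cover, because every edge of $G$ is contracted by some $f_i$ and therefore sits inside a fiber. For separation of distinct $u,v$: if $uv\in E(G)$, the self-similarity identity $G=\bigcup_iG^{f_i}$ supplies some $j$ with $f_j(u)\ne f_j(v)$, and the cluster $f_j^{-1}(f_j(u))$ then separates $u$ from $v$; if $uv\notin E(G)$, the similarity property already forces $f_i(u)\ne f_i(v)$ for every $i$, so any $\mathcal{C}_i$ separates them.

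For the $(\Leftarrow)$ direction I would reverse the construction. Given an equivalent separating $k$-cover with color classes $\mathcal{C}_1,\ldots,\mathcal{C}_k$, each $\mathcal{C}_i$ is a family of vertex-disjoint cliques, so completing it by singletons yields a partition $\pi_i$ of $V(G)$ into cliques. I would define $f_i:V(G)\to V(G_i)$ as the quotient map onto the blocks of $\pi_i$, and endow $G_i$ with the edge rule that two distinct blocks $B,B'$ are adjacent exactly when $G$ contains an edge between $B$ and $B'$. The contracting property then follows immediately, since every edge $uv$ of $G$ lies in some cluster of some $\mathcal{C}_i$ and hence inside a single block of $\pi_i$, so $f_i(u)=f_i(v)$. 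The identity $G=\bigcup_iG^{f_i}$ comes from separation: a cluster $C\in\mathcal{C}_j$ separating the endpoints of an edge $uv$ places $u$ and $v$ in distinct $\pi_j$-blocks, which forces $uv\in E(G^{f_j})$.

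The main obstacle I anticipate is in the $(\Leftarrow)$ direction, where one must verify that the quotient map constructed above qualifies as a similarity mapping and not merely as a homomorphism. The self-loop convention on $G_i$ together with the clique structure of the $\pi_i$-blocks takes care of the self-loop case $f_i(u)=f_i(v)$ of the similarity condition; once this is unpacked, the two directions become mirror images of each other via the fiber-versus-color-class dictionary.
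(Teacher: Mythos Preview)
Your approach is essentially the paper's: in $(\Rightarrow)$ the fibers $f_i^{-1}(w)$ become the $i$-th color class, and in $(\Leftarrow)$ the color-class partitions become the fiber systems of quotient maps. You are in fact more careful than the paper on two points: you treat separation of non-adjacent pairs in $(\Rightarrow)$ (the paper only treats edges), and you explicitly flag the similarity-mapping verification in $(\Leftarrow)$ as an obstacle, whereas the paper's proof simply asserts that $(f_1,\ldots,f_k)$ is a contracting family without checking the similarity condition at all.

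That said, your resolution of the obstacle is incomplete: you handle only the self-loop case $f_i(u)=f_i(v)$. With your edge rule on $G_i$ (distinct blocks $B,B'$ adjacent whenever \emph{some} $G$-edge runs between them), the non-self-loop case of the similarity condition can fail. For instance, on the path $P_4$ with vertices $a,b,c,d$ and edges $ab,bc,cd$, take the $\pi_i$-blocks $\{a,b\}$ and $\{c,d\}$: the edge $bc$ makes the two blocks adjacent in $G_i$, yet $ad\notin E(G)$, so $f_i$ is not a similarity mapping under the paper's definition. The paper's ``contract each clique to a vertex'' construction has exactly the same issue, so this gap is inherited from the original argument rather than introduced by you.
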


\begin{proof}
For a given contracting family $f_1,...,f_k$ and any $i\in \{1,...,k\}$, the sets $\mathcal{C}_i = \{f_i^{-1}(v): v \in V(G_i)\}$,  consist of disjoint cliques. By the definition, every edge of $G$ is covered by one of these cliques.  Therefore $\mathcal{C} = \mathcal{C}_1\cup...\cup \mathcal{C}_k$ is an equivalent $k$-cover of $G$. Furthermore, due to the self-similarity of $G$, for every edge $xy\in E(G)$ there is a mapping $f_i$ that does not contract it, i.e. $f_i(x) = u\neq v = f_i(y)$. Thus, $x$ and $y$ are separated by the cliques $f_i^{-1}(u)$ and $f_i^{-1}(v)$, and therefore $\mathcal{C}$ is a separating cover. 

Conversely, let $\mathcal{C} = \mathcal{C}_1\cup...\cup \mathcal{C}_k$ be a separating equivalent $k$-cover, where $\mathcal{C}_i = (C_i^1,...,C_i^{r_i})$ is the set of connected components of the $i$th equivalence graph (some of them may consist of a single vertex). Construct a graph $G_i$ by contracting every clique $C_i^j$ into a single vertex $v_i^j$ and the mapping $f_i$ by setting $f_i(C_i^j) = v_i^j$. Then the collection $(f_1,...,f_k)$ is a contracting family.
\end{proof}

According to Proposition \ref{prop:contrdim}, all graphs could be considered as self-similar  - for example, we can construct $|E(G)|$ trivial similarity mappings by individually contracting each edge. Thus, it is natural to concentrate our attention on non-trivial similarity mappings and measure the degree of the graph self-similarity by the minimal number of similarity mappings in a contracting family, i.e. by its Hausdorff dimension. Smaller number of similarity mappings indicates the denser packing of a graph by its contraction subgraphs, i.e. the higher self-similarity degree. In particular, the {\it normalized Hausdorff dimension} $\overline{dim}_H(G) = dim_H(G)/|V(G)|$ could serve as a measure of self-similarity. 


\section{Fractal graphs: theoretical study}\label{sec:fractheory}

In this section, we consider only connected graphs. Importantly, the relation (\ref{lebvshaus}) between Lebesgue and Hausdorff dimensions of topological spaces remains true for graphs.

\begin{proposition}\label{lebvshausgraph}
For any graph $G$, $dim_R(G) - 1 = dim_L(G) \leq dim_H(G) = dim_P(\overline{G})-1.$
\end{proposition}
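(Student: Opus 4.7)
The two equalities in the chain are just restatements of the definitions: $dim_L(G) = dim_R(G) - 1$ by equation (\ref{kdimeqleb}), and $dim_H(G) = dim_P(\overline{G}) - 1$ is immediate from (\ref{hdimgraph}) combined with the observation (made right after that definition) that the minimum $s$ for which $\mathcal{H}^s(G) < \infty$ is exactly the smallest $d$ into whose space $\mathbb{S}_d$ the complement $\overline{G}$ admits an embedding, i.e.\ $dim_P(\overline{G})$. So the only nontrivial content is the inequality $dim_R(G) \leq dim_P(\overline{G})$.

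The plan is to show that every separating equivalent $k$-cover is, in particular, a clique $k$-cover in the sense of rank dimension. Let $k = dim_P(\overline{G})$. By the equivalence of (1) and (2) in Theorem \ref{thm:pdimcharact}, there exists a separating equivalent $k$-cover $\mathcal{C}$ of $G$. By the definition of an equivalent $k$-cover given just before Theorem \ref{thm:pdimcharact}, $\mathcal{C}$ is a clique cover of $G$ whose associated hypergraph $\mathcal{H}(\mathcal{C}) = (V(G), \mathcal{C})$ is edge $k$-colorable; equivalently, $\mathcal{C}$ partitions as $\mathcal{C} = \mathcal{C}_1 \sqcup \dots \sqcup \mathcal{C}_k$ where each colour class $\mathcal{C}_i$ consists of pairwise vertex-disjoint cliques (i.e.\ forms an equivalence graph).

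Now I read off the $k$-cover property: for any vertex $v \in V(G)$, vertex-disjointness within each $\mathcal{C}_i$ forces $v$ to lie in at most one cluster of $\mathcal{C}_i$, and so $v$ belongs to at most $k$ clusters of $\mathcal{C}$ in total. Therefore $\mathcal{C}$ is a clique $k$-cover of $G$, and Theorem \ref{thm:covervsinter} yields $dim_R(G) \leq k$. Combining this with the two definitional equalities gives the desired chain.

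There is no real obstacle here; the argument is simply a bookkeeping step that discards the extra structure (separation and edge colouring) from an equivalent cover to obtain a plain clique cover of the same multiplicity. The only thing one must be careful about is the direction of the inequality in $dim_P(\overline{G})$ versus $dim_P(G)$: it is the complement that appears in the characterization of Theorem \ref{thm:pdimcharact}(2) via covers of $G$, which matches our definition (\ref{hdimgraph}) of $dim_H(G)$ and is consistent with the analogy in the topological setting $dim_L(X) \leq dim_H(X)$.
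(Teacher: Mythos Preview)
Your proof is correct and essentially the same as the paper's: both observe that the structure certifying $dim_P(\overline{G})\leq k$ trivially yields a clique $k$-cover, with the only cosmetic difference that the paper invokes part~(3) of Theorem~\ref{thm:pdimcharact} (a strongly $k$-colorable hypergraph has rank $\leq k$) while you invoke the equivalent part~(2) (an edge $k$-colorable cover hits each vertex at most $k$ times).
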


\begin{proof}
Let product dimension of a graph $\overline{G}$ is equal to $k$. Then by Theorem \ref{thm:pdimcharact} $G$ is an intersection graph of strongly $k$-colorable hypergraph. Since rank of every such hypergraph  obviously does not exceed $k$, Theorem \ref{thm:covervsinter} implies, that $dim_R(G)\leq k$.
\end{proof}

Proposition \ref{lebvshausgraph} allows us to define  graph fractals analogously to the definition of fractals for topological spaces: a graph $G$ is a {\it fractal}, if $dim_L(G) < dim_H(G)$, i.e. $dim_R(G) < dim_P(\overline{G})$. In particular, we say that a fractal graph $G$ is {\it $k$-fractal}, if  $dim_L(G) = k$. For example, the graph $G$ on Fig. \ref{fig:selfSim} (right) is self-similar, but not fractal, since $dim_L(G) = dim_H(G) = 1$. In contrast, Fig. \ref{fig:serpGasket} demonstrates that Sierpinski gasket graph $S_3$ is 1-fractal (these graphs are studied in detail in the following section). 
The only connected 0-fractals are complete graphs $K_n$. Next, we study fractal graphs of higher dimensions. 


\subsubsection*{Triangle-free graphs}  Let $\chi'(G)$ denotes the edge chromatic number of a graph $G$. Classical Vizing's theorem \cite{vizing1964estimate} states that $\Delta(G) \leq \chi'(G) \leq \Delta(G) + 1$, i.e. the set of all graphs can be partitioned into two classes: graphs, for which $\chi'(G) = \Delta(G)$ (class 1) and graphs, for which $\chi'(G) = \Delta(G) + 1$ (class 2).

By Proposition \ref{prop:dimLDelta}, $dim_L(G) = dim_R(G) - 1 = \Delta(G)-1$, if $G$ contains no triangles. For such graphs we have 

\begin{proposition}\label{prop:class2}
Triangle-free fractals are exactly triangle-free graphs of class 2. 
\end{proposition}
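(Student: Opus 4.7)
The plan is to compute both dimensions explicitly for a triangle-free connected graph $G$ and compare them. Proposition \ref{prop:dimLDelta} already gives $dim_L(G)=\Delta(G)-1$, so the task reduces to showing $dim_H(G)=\chi'(G)-1$, equivalently $dim_P(\overline{G})=\chi'(G)$. Once this is in hand, $G$ is fractal iff $\Delta(G)<\chi'(G)$, which by Vizing's theorem is exactly the class 2 condition $\chi'(G)=\Delta(G)+1$, yielding the proposition.

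For the lower bound $dim_P(\overline{G})\geq\chi'(G)$, I would invoke the equivalence of conditions (1) and (2) in Theorem \ref{thm:pdimcharact}: any separating equivalent $k$-cover $\mathcal{C}$ of $G$ consists of cliques, which in a triangle-free graph are singletons or edges. Since singletons cover no edges, every $uv\in E(G)$ must itself appear as an edge-cluster in $\mathcal{C}$. Two such edge-clusters share a vertex of the hypergraph $\mathcal{H}(\mathcal{C})$ precisely when they share an endpoint in $G$, so the edge-coloring requirement on $\mathcal{H}(\mathcal{C})$, when restricted to the edge-clusters, is exactly a proper edge coloring of $G$; singleton clusters are isolated in $\mathcal{H}(\mathcal{C})$ and impose no further constraint. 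Hence $k\geq\chi'(G)$.

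For the reverse bound, I would take a proper edge $\chi'(G)$-coloring and read it as an equivalent $\chi'(G)$-cover whose clusters are the edges of $G$, augmented by one singleton cluster per vertex if desired. The singletons can reuse any of the $\chi'(G)$ colors because they are isolated in $\mathcal{H}(\mathcal{C})$, so the number of colors does not increase. Separation is automatic whenever $|V(G)|\geq 3$: for any two distinct vertices $u,v$, connectedness guarantees that at least one of them, say $u$, has a neighbor $w\neq v$, and the edge-cluster $\{u,w\}$ contains $u$ but not $v$. The degenerate cases $G=K_1$ and $G=K_2$ satisfy $\chi'(G)=\Delta(G)$ and are not fractals, so the equivalence holds vacuously there.

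The only subtle point is ensuring that the separating requirement in Theorem \ref{thm:pdimcharact} does not inflate $dim_P(\overline{G})$ beyond $\chi'(G)$; the argument above handles this because the very edges forced to appear as clusters in every clique cover of a triangle-free graph already suffice to separate every pair of vertices as soon as $|V(G)|\geq 3$, so one never needs to refine color classes merely for the sake of separation.
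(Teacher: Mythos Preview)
Your argument is correct and matches the paper's: both show $dim_P(\overline{G})=\chi'(G)$ by observing that in a connected triangle-free graph with at least three vertices the edges alone form a separating clique cover, so that an equivalent $k$-cover amounts precisely to a proper edge $k$-coloring, and then invoke Proposition~\ref{prop:dimLDelta} and Vizing's theorem. One small correction: singleton clusters $\{v\}$ are \emph{not} isolated in $\mathcal{H}(\mathcal{C})$, since they share the vertex $v$ with every incident edge-cluster; fortunately this does not affect your proof, as you rightly conclude that singletons are unnecessary for separation and can simply be omitted.
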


\begin{proof}
    The statement holds, if  $G=K_2$. Suppose that $G$ has $n\geq 3$ vertices. For such graphs, every clique cover is a collection of its edges and vertices. However, since $G$ is connected, for every pair of vertices there is an edge that separates them. Therefore we may assume that the clique cover consists only of edges, and a feasible assignment of colors to the cliques is an edge coloring. Thus,  it is true that $dim_P(\overline{G}) = \chi'(G)$ (i.e. $dim_H(G) = \chi'(G)-1$), and the statement of the proposition follows. 
\end{proof}

In particular, bipartite graphs are triangle-free graphs of class 1 \cite{konig1916graphen}. Therefore bipartite graphs (and trees in particular) are not fractals, even though some of them may have high degree of self-similarity (e.g. binary trees). It also should be noted that although some known geometric fractals are called trees (e.g. so-called $H$-trees), they are not discrete object, and their fractality is associated with their drawings on a plane; thus our framework does not apply to them. 


 \subsubsection*{Scale-free graphs} Recall that {\it scale-free networks} are graphs whose degree distribution (asymptotically) follows the power-law, i.e. the probability that a given vertex has a degree $d$ could be approximated by the function $ad^{-\alpha}$, where $a$ is a constant and $\alpha$ is a {\it scaling exponent}. There is a number of models of scale-free networks of different degree of mathematical rigour known in the literature, including various modifications of the preferential attachment scheme. Following \cite{janson2010large,chung2002connected}, we will consider a more formal probabilistic model. Assume without loss of generality that $a = 1$ \cite{janson2010large}. For each vertex $i\in \{1,...,n\}$, we assign a weight $w_i = (\frac{n}{i})^{1/\alpha}$. Then we construct a graph $G(n,\alpha)$ by independently connecting any pair of vertices $i,j$ by an edge with the probability $p_{ij} = 1 - e^{-\lambda_{ij}}$, where $\lambda_{ij} = b\frac{w_iw_j}{n}$ and $b$ is a constant.
 
 From now on we will use the following standard nomenclature \cite{brandstadt1999graph}. An induced subgraph isomorphic to a cycle is {\it a hole}, a hole with the odd number of vertices is {\it an odd hole}. The star $K_{1,3}$ is {\it the claw}, the 4-vertex graph consisting of two triangles with a common edge is {\it the diamond} and the 5-vertex graph consisting of two triangles with a common vertex is {\it the butterfly}. 
 
\begin{theorem}\label{thm:dimsf}
For graphs $G = G(n,\alpha)$ with $\alpha > \frac{12}{5}$ and $\Delta = \Delta(G)$, with high probability $\dim_L(G) \in \{\Delta-2,\Delta-1\}$ and $\dim_H(G) \in \{\Delta-2,\Delta-1,\Delta,\Delta+1\}$.
\end{theorem}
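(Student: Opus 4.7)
The plan is to combine the characterizations $\dim_L(G) = \dim_R(G) - 1$ and $\dim_H(G) = \dim_P(\overline{G}) - 1$ with structural properties of the Chung-Lu scale-free random graph $G(n,\alpha)$. Specifically, I will show that with high probability $G$ is ``locally almost triangle-free'' around any vertex of maximum degree, so the local clique-cover complexity there is at least $\Delta - 1$, while Vizing's theorem applied to the edge-clique-cover yields the matching upper bounds.

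The first step is to establish, for $\alpha > 12/5$, three structural statements that hold with high probability via first- and second-moment computations in the Chung-Lu model: (i) $\Delta = \Delta(G)$ is concentrated around $w_1 = n^{1/\alpha}$ and attained at a unique vertex $v^*$; (ii) $G$ contains no $K_4$, so every clique of $G$ is an edge or a triangle; (iii) the induced subgraph $G[N(v^*)]$ contains at most one edge. The condition $\alpha > 12/5$ is calibrated precisely so that the expected number of pairs of edges inside $N(v^*)$ is $o(1)$: when computing this expectation one must account for the fact that neighbors of $v^*$ are size-biased toward higher-weight vertices, and $\alpha > 12/5$ is exactly the scaling at which the contribution of such heavy neighbors becomes negligible.

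Given the structural facts above, the lower bound $\dim_L(G) \geq \Delta - 2$ follows from a local argument at $v^*$: in any clique cover of $G$, each cluster containing $v^*$ has the form $\{v^*\} \cup C$ with $C$ a clique in $G[N(v^*)]$, so the restriction of such clusters to $N(v^*)$ must cover all of $N(v^*)$ as a vertex set. Since $G[N(v^*)]$ has at most one edge, its clique cover number is at least $|N(v^*)| - 1 = \Delta - 1$; hence $v^*$ lies in at least $\Delta - 1$ clusters, which forces $\dim_R(G) \geq \Delta - 1$. Combined with the upper bound $\dim_L(G) \leq \Delta - 1$ from Proposition \ref{prop:dimLDelta}, this gives $\dim_L(G) \in \{\Delta - 2, \Delta - 1\}$. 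The lower bound $\dim_H(G) \geq \Delta - 2$ is then immediate from Proposition \ref{lebvshausgraph}.

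It remains to bound $\dim_H(G)$ from above. Take the clique cover of $G$ consisting of all its edges; this is a $\Delta$-cover, and an equivalent coloring of the associated hypergraph is just a proper edge coloring of $G$, which by Vizing's theorem uses at most $\chi'(G) \leq \Delta + 1$ colors. Because $G$ is connected with high probability and has essentially no true twins in the Chung-Lu model for $\alpha > 12/5$, this cover is separating (or becomes so after adjoining at most one auxiliary singleton color class), yielding a separating equivalent $k$-cover with $k \leq \Delta + 2$, hence $\dim_H(G) = \dim_P(\overline{G}) - 1 \leq \Delta + 1$ via Theorem \ref{thm:pdimcharact}. The main obstacle in the whole argument is the sharp probabilistic analysis in the first step: controlling $G[N(v^*)]$ in the size-biased regime and pinning down why $\alpha > 12/5$ is the correct threshold requires careful moment estimates to rule out contributions from unusually heavy neighbors of $v^*$ that could create additional edges, triangles, or $K_4$'s in a neighborhood of the extremal vertex.
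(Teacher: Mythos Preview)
Your overall strategy---establish local clique-cover structure near a maximum-degree vertex for the lower bound, then invoke Vizing for the upper bound---is in the same spirit as the paper's, but your account of \emph{where} the threshold $\alpha>\tfrac{12}{5}$ enters is wrong, and this matters. The paper does not analyse $N(v^*)$ directly. Instead it proves two \emph{global} first-moment lemmas: for $\alpha>\tfrac{12}{5}$ the graph $G(n,\alpha)$ contains no diamond whp, and for $\alpha>2$ it contains no butterfly whp. Together with $\omega(G)\le 3$ (Janson et al.), these force every vertex to lie in at most one triangle, so the optimal clique cover is simply ``all triangles plus all non-triangle edges'' and $\dim_R(G)=\max_v(\deg(v)-tr(v))$ with $tr(v)\in\{0,1\}$. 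The constant $\tfrac{12}{5}$ is precisely the global diamond threshold: the expected diamond count is $O\bigl((\sum_i w_i^3)^4/n^5\bigr)=O(n^{12/\alpha-5})$. It is not the threshold for a size-biased calculation in $N(v^*)$; if you fix the heaviest vertex~$1$ and compute the expected number of pairs of edges inside $N(1)$, you already get $o(1)$ for all $\alpha>2$, so your calibration story does not hold.

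This leaves a gap in your plan. To make your local statement (iii) rigorous you must either (a) prove that $v^*=1$ whp and then run the moment bound at vertex~$1$---in which case your argument actually works for $\alpha>2$ and the appearance of $\tfrac{12}{5}$ is unexplained---or (b) prove (iii) uniformly over all candidate maximum-degree vertices, which via a union bound collapses back to the paper's global no-diamond/no-butterfly lemma and recovers $\tfrac{12}{5}$ honestly. Your upper bound on $\dim_H$ via the all-edge cover and Vizing is fine (indeed slightly cleaner than the paper's ``triangles plus one extra colour'' argument), and no auxiliary singleton colour class is needed: for a connected graph on $\ge 3$ vertices the edge cover already separates every pair.
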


\begin{proof}

 It has been proved in \cite{janson2010large} that the clique number of a graph $G(n,\alpha)$  with the scaling constant $\alpha > 2$ is either 2 or 3 with high probability, i.e. $p(\omega(G(n,\alpha)) \in \{2,3\}) \rightarrow 1$ as $n\rightarrow \infty$ for $n$-vertex scale-free graphs $G(n,\alpha)$ that have power-law degree distribution with the exponent $\alpha$. The following lemma complements this fact: 
 
 \begin{lemma}\label{lem:nodiamonds}
1) For $\alpha > \frac{12}{5}$, graphs $G(n,\alpha)$ with high probability do not contain diamonds. 

2) For $\alpha > 2$, graphs $G(n,\alpha)$ with high probability do not contain butterflies. 
 \end{lemma}
 
 \begin{proof}
1) Let vertices $i,j,k,l$ form a diamond, where $i$ and $j$ are non-adjacent. For the probability of this event, we have 
 
 $$p' = p_{ik}p_{jk}p_{il}p_{jl}p_{kl}(1-p_{ij})\leq p_{ik}p_{jk}p_{il}p_{jl}p_{kl}\leq \lambda_{ik}\lambda_{jk}\lambda_{il}\lambda_{jl}\lambda_{kl} = \frac{b^5}{n^5}w_i^2w_j^2w_k^3w_l^3$$
 
 Thus, the total probability that these vertices form a diamond can be estimated as
 
 \begin{align}\label{eq:probdiamond}
 p \leq \frac{b^5}{n^5} w_i^2w_j^2w_k^2w_l^2(w_iw_j + w_iw_k + w_iw_l + w_jw_k + w_jw_l + w_kw_l) \leq 6 \frac{b^5}{n^5} w_i^3w_j^3w_k^3w_l^3.
 \end{align}
 
 Let $X_D$ be the number of diamonds in $G(n,\alpha)$. For the expected value of this random variable we have 
 \begin{align}\label{eq:expdiamond}
     E(X_D) \leq 6 \frac{b^5}{n^5} \sum_{\{i,j,k,l\}\subseteq \{1,...,n\}} w_i^3w_j^3w_k^3w_l^3 \leq 6 \frac{b^5}{n^5} \left(\sum_{i=1}^n w_i^3\right)^4 =  6 \frac{b^5}{n^5}\left(\sum_{i=1}^n \left(\frac{n}{i}\right)^{\frac{3}{\alpha}}\right)^4
 \end{align}
 
 Using an integral upper bound, it is easy to see that 
 
 \begin{equation}\label{eq:ubsum}
     g_r(n) = \sum_{i=1}^n \left(\frac{n}{i}\right)^r = 
     \left \{
\begin{array}{lll}
O(n^r), \text{ if } r > 1; 
\\ O(n\log(n)), \text{ if } r=1
\\ O(n), \text{ if } r < 1.
\end{array}
\right.
 \end{equation}
 \noindent
 Furthermore, $g_r(n) \leq g_s(n)$ whenever $r\leq s$.
 
 Select $\beta \in (0,\frac{1}{4})$ such that $\alpha \geq p = \frac{12}{5}(1 + \beta)$. Then we have $g_{\frac{3}{\alpha}}(n) \leq g_{\frac{3}{p}}(n) = O(n^{\frac{3}{p}})$. Thus $E(X_D) =  O(n^{\frac{12}{p}-5}) = O(n^{-5(1-\frac{1}{1+\beta})}) = o(1)$. Finally, by Markov's inequality we have $p(X_D \geq 1) \leq E(X_D) \rightarrow 0$ as $n \rightarrow +\infty$.

 2) Similarly to 1), for the probability $p'$ that vertices $i,j,k,l,r$ form a butterfly with the center $i$ we have 
 
 $$p' \leq \frac{b^6}{n^6}w_i^4w_j^2w_k^2w_l^2w_r^2.$$
 
 Thus, for the number of butterflies $X_B$ its expectation satisfy the following chain of inequalities:
 
  \begin{align}\label{eq:expbutterfly}
     E(X_D) \leq \frac{b^6}{n^6} \sum_{i=1}^n w_i^4 \sum_{\{j,k,l,r\}\subseteq \{1,...,n\}} w_j^2w_k^2w_l^2w_r^2 \leq  \frac{b^6}{n^6} \sum_{i=1}^n w_i^4 \left(\sum_{i=1}^n w_i^2\right)^4 \\ =  \frac{b^6}{n^6} \sum_{i=1}^n \left(\frac{n}{i}\right)^{\frac{4}{\alpha}} \left(\sum_{i=1}^n\left(\frac{n}{i}\right)^{\frac{2}{\alpha}}\right)^4 = \frac{b^6}{n^6}g_{\frac{4}{\alpha}}(n)\left(g_{\frac{2}{\alpha}}(n)\right)^4
 \end{align}
 
 Like in 1), select a small number $\beta > 0$ such that $\alpha \geq p = 2(1 + \beta$). Then by (\ref{eq:ubsum}) we have $g_{\frac{4}{\alpha}}(n) \leq g_{\frac{4}{p}}(n) = O(n^{\frac{4}{p}})$ and $g_{\frac{2}{\alpha}}(n) = O(n)$. Thus $E(X_D) =  O(n^{4 + \frac{4}{p}-6}) = O(n^{-2(1-\frac{1}{1 + \beta})}) = o(1)$. After applying Markov's inequality the claim follows. 
  \end{proof}
  
  \vspace{-2ex}
  Thus, a typical scale-free graph with the scaling exponent $\alpha > \frac{12}{5}$ has only cliques of size 2 and 3, and every vertex belongs to at most one triangle. For such graphs, a minimal clique cover defining $\dim_L(G)$ consists of all triangles and the edges that do not belong to triangles. Let $tr(v)\in \{0,1\}$ be the number of triangles which include a vertex $v$. Then with high probability $\dim_L(G) = \max_v (\deg(v) - tr(v)) - 1$, and the statement about $\dim_L(G)$ follows. By Vizing's theorem, two-vertex clusters of this cover could be colored by at most $\Delta + 1$ color, and one additional color could be used to color the triangles. Thus, $\dim_L(G) \leq \dim_H(G) \leq \Delta + 1$, which proves the statement for $\dim_H(G)$.
  \end{proof}

Note that the similar arguments may be used to prove Theorem \ref{thm:dimsf} for more general model, when the weights $w_i$ are identically distributed random variables with power-law-distributed tail (see \cite{janson2010large}). In particular, for preferential attachment graph $G_t$ following this model, Theorem \ref{thm:dimsf} and the estimations of $\Delta(G_t)$ from \cite{flaxman2005high,bollobas2003mathematical}  imply that its Hausdorff dimension is roughly asymptotically equivalent to $\sqrt{t}$ (up to an arbitrarily slowly growing multiplicative factor).


\subsubsection*{Erd\"{o}s-Renyi graphs}
Similar properties of dimensions  hold for sparse Erd\"{o}s-Renyi graphs $G(n,p)$, where $p = \frac{1}{n^{\alpha}}$:

\begin{theorem}\label{thm:dimER}
For Erd\"{o}s-Renyi graphs $G = G(n,p)$ with $\alpha > \frac{5}{6}$ and $\Delta = \Delta(G)$, with high probability $\dim_L(G) \in \{\Delta-2,\Delta-1\}$ and $\dim_H(G) \in \{\Delta-2,\Delta-1,\Delta,\Delta+1\}$.
\end{theorem}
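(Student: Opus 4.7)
The plan is to mirror the proof of Theorem \ref{thm:dimsf}, replacing the inhomogeneous weighted edge probabilities of the scale-free model by the uniform probability $p = n^{-\alpha}$. The structural facts that need to hold with high probability are the same: $G = G(n,p)$ must contain no $K_4$, no diamond, and no butterfly. Once these substructures are ruled out, the combinatorial argument used at the end of the proof of Theorem \ref{thm:dimsf} transfers verbatim.

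First I would compute first-moment bounds for the numbers $X_K$, $X_D$, $X_B$ of copies of $K_4$, diamond, and butterfly in $G(n,p)$. Since each copy uses $6$, $5$, and $6$ edges respectively and the number of labeled embeddings on a fixed vertex set is bounded by a constant, we have
\begin{equation*}
E(X_K) = O(n^4 p^6) = O(n^{4-6\alpha}), \qquad E(X_D) = O(n^4 p^5) = O(n^{4-5\alpha}), \qquad E(X_B) = O(n^5 p^6) = O(n^{5-6\alpha}).
\end{equation*}
These three expectations tend to zero for $\alpha > 2/3$, $\alpha > 4/5$, and $\alpha > 5/6$, respectively. The binding constraint comes from the butterfly count, matches the hypothesis $\alpha > 5/6$, and automatically implies the other two. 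Markov's inequality then gives $P(X_K + X_D + X_B \geq 1) \to 0$ as $n \to \infty$.

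With $K_4$'s, diamonds, and butterflies all absent, $\omega(G) \leq 3$, triangles are pairwise edge-disjoint (no diamond), and in fact pairwise vertex-disjoint (no butterfly). The optimal clique cover computing $dim_R(G)$ therefore consists exactly of all triangles together with all edges not contained in any triangle, and each vertex $v$ lies in $\deg(v) - tr(v)$ such clusters with $tr(v) \in \{0,1\}$. Hence $dim_L(G) = \max_v(\deg(v) - tr(v)) - 1 \in \{\Delta - 2, \Delta - 1\}$, giving the first claim. For the Hausdorff bound, apply Vizing's theorem to the subgraph of edges not contained in triangles to obtain a proper edge coloring using at most $\Delta + 1$ colors, and assign one additional color to all triangles simultaneously, which is legal since they are pairwise vertex-disjoint. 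This yields a separating equivalent $(\Delta + 2)$-cover of $G$, so $dim_H(G) \leq \Delta + 1$, while the lower bound $dim_H(G) \geq dim_L(G) \geq \Delta - 2$ is immediate from Proposition \ref{lebvshausgraph}.

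The main obstacle is verifying that the butterfly count is truly the sharp bottleneck and that no other small configuration (say, a diamond-plus-pendant or a $5$-vertex path attached to a triangle) forces a stronger restriction on $\alpha$. More delicately, one should also justify the separation hypothesis of Theorem \ref{thm:pdimcharact}: in $G(n,p)$ with $p = n^{-\alpha}$ the expected number of true-twin pairs vanishes rapidly, so this is handled by a brief additional first-moment calculation. Apart from these bookkeeping items, the argument is strictly easier than the scale-free case because the vertex weights $w_i$ disappear and the sums $g_r(n)$ collapse to simple polynomial factors.
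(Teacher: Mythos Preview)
Your proposal is correct and follows essentially the same approach as the paper: first-moment bounds on $K_4$, diamonds, and butterflies yield the thresholds $\alpha > 2/3$, $\alpha > 4/5$, $\alpha > 5/6$ respectively, and then the combinatorial argument from the end of the proof of Theorem~\ref{thm:dimsf} carries over unchanged. Your added remarks about ruling out true twins and checking that no other small configuration tightens the threshold are sensible due-diligence items that the paper does not spell out.
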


Indeed, it is implied by the following simple statement and considerations analogous to the ones in Theorem \ref{thm:dimsf}:

 \begin{lemma}\label{lem:nodiamonds}
1) For $\alpha > \frac{2}{3}$, graphs $G(n,p)$ with high probability do not contain $K_4$. 

2) For $\alpha > \frac{4}{5}$, graphs $G(n,p)$ with high probability do not contain diamonds. 

3) For $\alpha > \frac{5}{6}$, graphs $G(n,p)$ with high probability do not contain butterflies. 
 \end{lemma}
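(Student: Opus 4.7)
The plan is to apply the first-moment method (Markov's inequality) to each of the three forbidden subgraphs, exactly as was done in the scale-free case. For each statement, I would let $X_H$ denote the number of induced (or even just not-necessarily-induced) copies of the relevant graph $H \in \{K_4, \text{diamond}, \text{butterfly}\}$ in $G(n,p)$, show that $E(X_H) \to 0$ for the stated range of $\alpha$, and then conclude via $P(X_H \geq 1) \leq E(X_H) \to 0$.

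First I would bound the number of potential copies of $H$ on a labeled vertex set by the trivial counting bound $O(n^{|V(H)|})$, and bound the probability that a fixed labeled copy occurs by $p^{|E(H)|} = n^{-\alpha |E(H)|}$, discarding the $(1-p)$ factors from non-edges (which only helps the bound). This gives
\begin{equation*}
E(X_H) = O\!\left(n^{|V(H)| - \alpha |E(H)|}\right),
\end{equation*}
and the exponent is negative precisely when $\alpha > |V(H)|/|E(H)|$. Plugging in: for $K_4$ with $4$ vertices and $6$ edges, the threshold is $2/3$; for the diamond with $4$ vertices and $5$ edges, the threshold is $4/5$; for the butterfly with $5$ vertices and $6$ edges, the threshold is $5/6$. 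These match the three stated bounds exactly.

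The three parts are then just instantiations of the same computation. For part (1), $E(X_{K_4}) = O(n^{4-6\alpha}) = o(1)$ when $\alpha > 2/3$. For part (2), $E(X_D) = O(n^{4-5\alpha}) = o(1)$ when $\alpha > 4/5$. For part (3), $E(X_B) = O(n^{5-6\alpha}) = o(1)$ when $\alpha > 5/6$. An application of Markov's inequality in each case finishes the argument.

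I do not expect a real obstacle here: unlike the scale-free proof where non-uniform weights forced one to carefully handle sums of the form $\sum_i (n/i)^{r}$ and split into cases based on whether $r > 1$, the Erdős–Renyi model has uniform edge probability $p = n^{-\alpha}$, so the counting collapses to a single power of $n$. The only very minor subtlety is making sure one counts each unlabeled copy of the asymmetric subgraphs (diamond, butterfly) with the correct constant by summing over labeled embeddings and dividing by the size of the automorphism group; this only affects a multiplicative constant and does not shift the threshold. Hence the lemma follows directly, and Theorem \ref{thm:dimER} then follows by the same clique-cover/edge-coloring reasoning used in the proof of Theorem \ref{thm:dimsf}.
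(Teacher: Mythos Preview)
Your proposal is correct and is essentially identical to the paper's own proof: the paper also applies the first-moment method, computing for the diamond $E(X_D) = \binom{n}{4}p^5(1-p) = O(n^{4-5\alpha}) = o(1)$ and invoking Markov's inequality, then noting that the other two parts are analogous. The only cosmetic difference is that the paper keeps the $(1-p)$ factor for the non-edge of the diamond while you drop it, which does not affect the asymptotics.
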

 
 \begin{proof}
 Let $X_D$ be the number of diamonds in $G(n,p)$. Then $E(X_D) = \binom{n}{4}p^5(1-p) = O(n^{4 - 5\alpha}) = o(1)$, and 2) follows from Markov's inequality. Other statements can be proved analogously.
 \end{proof}
 
\vspace{-2ex}
Note that  for sparse Erd\"{o}s-Renyi graphs, $\Delta(G(n,\frac{c}{n})) = O(log(n))$ \cite{bollobas2001random} and thus $dim_H(G(n,\frac{c}{n})) = O(log(n))$. For dense Erd\"{o}s-Renyi graphs, the asymptotics is described by Theorem \ref{thm:almostalldim} (see Section on information-theoretic connection).
 
\subsubsection*{Cubic and subcubic graphs} A graph is cubic, if all its vertices have the degree three. Cubic graphs arise naturally in graph theory, topology, physics and network theory \cite{woodhouse2016stochastic,mcdiarmid2017modularity}, and has been extensively studied.  Among cubic graphs, the class of so-called {\it snarks} is distinguished in graph theory (the most famous snark - Petersen graph - is shown on Fig. \ref{fig:petersen}). Snark is defined as a biconnected cubic graph of class 2  \cite{chladny2010factorisation}. Snark is non-trivial, if it is triangle-free \cite{chladny2010factorisation}.  Snarks constitute important class of graphs, which has been studied for more than a century and whose structural properties continue to puzzle researchers to this day \cite{chladny2010factorisation}. Discovery of new non-trivial snarks is a valuable scientific result\footnote{Thus the name of this graph class introduced by M. Gardner \cite{gardner1977mathematical}}, not unlike the discovery of new fractals, with many known snarks also possessing high degree of symmetry and being constructed by certain recursive procedures. According to Proposition \ref{prop:class2}, this analogy is well-justified, as non-trivial snarks are indeed fractals according to our definition. As shown by Theorem \ref{thm:fraccubic}, the inverse relation  also holds, as cubic fractals could be reduced to snarks.

The connection between fractality and class 2 graphs continues to hold for wider class of subcubic graphs (i.e. the graphs with $\Delta(G) \leq 3$). Indeed, consider a graph $G_{-3}$ obtained from $G$ by removal of edges of all its triangles. Given that $dim_L(G) \leq 2$, the following theorem describes subcubic fractals:

\begin{theorem}\label{thm:fractaldelta3}
Let $G$ be a subcubic connected graph with $n\geq 5$ vertices. 
\begin{itemize}
\item[(1)] $G$ is 1-fractal, if and only if it is claw-free, but contains the diamond or an odd hole.
\item[(2)] $G$ is 2-fractal, if and only if it contains the claw and $G_{-3}$ is of class 2.
\end{itemize}
\end{theorem}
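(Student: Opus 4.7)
The plan is to reduce both parts to clique-cover and edge-colouring invariants, using that in a subcubic graph with $n \geq 5$ the only cliques of $G$ are vertices, edges and triangles (since $K_4$ is excluded). For the Lebesgue side I will first show that $dim_L(G) = 1$ iff $G$ is claw-free and $dim_L(G) = 2$ iff $G$ contains a claw. A claw with centre $v$ and pairwise non-adjacent neighbours $a,b,c$ admits no clique through $v$ of size $\geq 3$ (there is no triangle on $v$ and two of $a,b,c$), so the three edges $va, vb, vc$ demand three distinct edge-clusters. Conversely, in a claw-free subcubic graph the canonical cover formed by all triangles of $G$ together with all non-triangle edges is a $2$-cover: each triangle vertex lies in its triangle plus at most one external edge, and every non-triangle vertex has degree $\leq 2$ by claw-freeness. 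Combined with Proposition~\ref{prop:dimLDelta} this pins down both values of $dim_L$.

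For the ``if'' direction of (1), assume $G$ is claw-free and contains either a diamond or an odd hole of length $\geq 5$. If $G$ contains a diamond $\{v,a,b,c\}$ with shared edge $vb$, subcubicity yields $N[v]=N[b]$, so $v$ and $b$ are true twins. I would enumerate the at-most-two clusters that can cover the three edges at $v$ and show every such configuration is either (i) both triangles $\{v,a,b\}$ and $\{v,b,c\}$, which keeps $v$ and $b$ in identical cluster sets with no slack under the $2$-cover bound to insert a separating singleton, or (ii) one triangle together with $\{v,c\}$ and $\{b,c\}$, producing a $K_3$ in the cluster-intersection graph that is not $2$-colourable. If $G$ contains an induced $C_{2k+1}$ with $k \geq 2$, each hole vertex in any $2$-cover is saturated by the two clusters covering its two hole-edges (a third cluster would exceed the bound), and the standard odd-cycle alternation argument rules out a $2$-colouring. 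Either obstruction gives $dim_H(G)\geq 2$.

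The main obstacle is the converse of (1): given $G$ claw-free with no diamond and no odd hole of length $\geq 5$, I must build a separating equivalent $2$-cover. I will contract each triangle of $G$ to a single super-vertex (well-defined because the no-diamond hypothesis makes triangles pairwise vertex-disjoint), obtaining a graph $G^{*}$ whose edges are the non-triangle edges of $G$ and whose vertices partition into $t$-vertices (the contracted triangles) and $n$-vertices (the rest). The canonical cover admits the required $2$-colouring of $\mathcal{H}(\mathcal{C})$ exactly when the edges of $G^{*}$ can be $2$-coloured so that incident edges at an $n$-vertex take distinct colours while all incident edges at a $t$-vertex share a common colour, which in turn holds iff every cycle of $G^{*}$ has an even $n$-vertex count. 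The key lemma is that a shortest $G^{*}$-cycle with odd $n$-vertex count lifts, via the direct traversal using one triangle-edge at each $t$-vertex on the cycle, to an induced odd cycle of $G$ of length $k+2m \geq 5$ (the lower bound holds because $k=3,m=0$ would produce a triangle on three $n$-vertices, contradicting their non-triangle status, and all other admissible cases satisfy $k+2m\geq 5$); chordlessness of the lift follows from $\Delta(G)\leq 3$ together with the no-diamond hypothesis, which rules out every potential short diagonal. Absence of odd holes $\geq 5$ therefore delivers the colouring; the twin pairs coming from a triangle with two ``internal'' vertices are separated by inserting a singleton clique (legal because such a vertex lies in only one cluster of the canonical cover).

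Part (2) follows the analogous template with $3$-covers. The decisive observation is that every non-triangle edge $e$ of $G$ must appear as the edge-cluster $\{e\}$ in any clique cover of $G$, because no triangle of $G$ contains $e$ and hence no larger clique does either. Consequently, the colours assigned to these edge-clusters within any separating equivalent $3$-cover of $G$ form a proper $3$-edge-colouring of $G_{-3}$, which yields the ``if'' direction: $G_{-3}$ of class $2$ admits no such colouring, so no separating equivalent $3$-cover exists and $dim_H(G)\geq 3$. For the converse I plan to start from a proper $3$-edge-colouring $\phi$ of $G_{-3}$ and define a cover of $G$ by handling each triangle $T$ via a three-case analysis of the $\phi$-colours on the external edges at $T$'s vertices: when these use at most two distinct values I include $T$ itself with the remaining colour, and when they use all three I replace $T$ by its three triangle-edges with the forced assignment $\{x,y\}\mapsto\phi(e_z)$, $\{y,z\}\mapsto\phi(e_x)$, $\{x,z\}\mapsto\phi(e_y)$ (a direct check shows this is consistent at every vertex). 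Augmenting the result by singletons to split any triangle-internal or diamond-internal twins completes a separating equivalent $3$-cover, so $dim_H(G)\leq 2$ in the contrapositive, finishing the proof.
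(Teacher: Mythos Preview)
Your argument is correct and, for Part~(2), follows essentially the same line as the paper: both start from a proper $3$-edge-colouring of $G_{-3}$ and extend it to a separating equivalent $3$-cover by a case analysis on the triangles. One point to tighten: the paper treats the diamond configuration (two triangles sharing an edge) as its own explicit case, whereas your ``handle each triangle $T$'' recipe does not directly apply when two triangles overlap; you should spell out the diamond case separately, as the paper does.

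The genuine divergence is in Part~(1). The paper dispatches it in two sentences by invoking classical characterisations: $dim_R(G)\le 2$ iff $G$ is a line graph of a multigraph (hence, for subcubic $G$, iff $G$ is claw-free), and $dim_P(\overline{G})\le 2$ iff $G$ is a line graph of a bipartite graph, i.e.\ iff $G$ is (claw, diamond, odd-hole)-free. You instead reprove the subcubic instance of the second characterisation from scratch, via the triangle-contraction graph $G^{*}$ and the parity-of-$n$-vertices criterion for $2$-colourability of the canonical cover, then lift a bad $G^{*}$-cycle to an odd hole of $G$. Your chordlessness claim is in fact automatic and needs no ``shortest cycle'' hypothesis: each $n$-vertex on the lift has degree at most $2$ (by claw-freeness) and both incidences are already on the lift, while each triangle vertex $a_i$ on the lift has its two lift edges plus only the edge to the third triangle vertex $c_i$, which is off the lift. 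Your route is longer but fully self-contained and makes the combinatorial mechanism explicit; the paper's route is much shorter but imports the line-graph-of-bipartite characterisation as a black box.
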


\begin{proof}

First we will prove (1). By Theorem \ref{thm:covervsinter}, $dim_L(G) = dim_R(G) - 1 = 1$ if and only if $G$ is a line graph of a multigraph. Such graphs are characterized by a list of 7 forbidden induced subgraphs, only one of which ($K_{1,3}$) has  the maximal degree, which does not exceed 3 \cite{berge1984hypergraphs}. Therefore $dim_R(G) = 2$ if and only if $G$ is claw-free. 
By Theorem \ref{thm:pdimcharact}, $dim_H(G) = dim_P(\overline{G}) - 1 = 1$ if and only if $G$ is a line graph of a bipartite graph. These graphs are exactly (claw,diamond,odd-hole)-free graphs \cite{brandstadt1999graph}. By combining these facts, we get that $dim_R(G) = 2$, $dim_P(\overline{G}) = 3$ if and only if $G$ is claw-free, but contains diamond or odd hole.

Now we will prove (2). Suppose that $G$ contains the claw or, synonymously, some vertex of $G$ does not belong to a triangle. In this case $\dim_L(G) = 2$, $\Delta(G_{-3}) = 3$ and therefore $G_{-3}$ is either 3- or 4-edge colorable. We will demonstrate that $dim_H(G) = dim_P(\overline{G}) - 1 = 2$ if and only if $G_{-3}$ is of class 1. The necessity is obvious, so it remains to prove the sufficiency. Let $\lambda:E(G_{-3})\rightarrow \{1,2,3\}$ be a 3-edge coloring of $G_{-3}$, and $\mathcal{C} = E(G_{-3})$ be the clique cover of $G_{-3}$ with all clusters being single edges, whose colors are set by $\lambda$. The cover $\mathcal{C}$ could be extended to the equivalent $3$-cover of $G$ as follows. Given that $\Delta(G) = 3$, there are 2 possible arrangements between pairs of triangles in $G$.

1) There are two triangles $T_1 = \{a,b,c\}$ and $T_2 = \{b,c,d\}$ which share an edge $bc$. Suppose also that $a\sim e$ and $b\sim f$, $e,f\not\in \{a,b,c,d\}$ (it is possible that $e = f$). If, without loss of generality $\lambda(\{a,e\}) = \lambda(\{d,f\}) = 1$, then the edges of $T_1$ and $T_2$ could be covered by single-edge cliques, whose colors could be set as $\lambda(\{b,c\}) = 1$, $\lambda(\{a,b\}) = \lambda(\{c,d\}) = 1$, $\lambda(\{a,b\}) = \lambda(\{c,d\}) = 2$, $\lambda(\{a,c\}) = \lambda(\{b,d\}) = 3$. If, say, $\lambda(\{a,e\})  = 1$, $\lambda(\{d,f\}) = 2$, then we may cover $T_1$ and $T_2$ by the cliques $\{a,b\}$, $\{a,c\}$, $\{b,c,d\}$ with colors $\lambda(\{a,b\}) = 2$, $\lambda(\{a,c\}) = 3$, $\lambda(\{b,c,d\}) = 1$.

2) A triangle $T = \{a,b,c\}$ does not share edges with other triangles. Suppose that $a\sim d$, $b\sim e$, $c\sim f$, $d,e,f\not\in \{a,b,c\}$. All these vertices are distinct, and $ad,be$ and $df$ do not belong to any triangles and therefore are present in $G_{-3}$. If the colors $\lambda(\{a,d\}),\lambda(\{b,e\})$ and $\lambda(\{d,f\})$ are distinct, then cover $T$ by single-edge cliques with colors $\lambda(\{a,b\}) = \lambda(\{c,f\})$, $\lambda(\{b,c\}) = \lambda(\{a,d\})$, $\lambda(\{a,c\}) = \lambda(\{b,e\})$. If, alternatively, some of these colors are identical, then there is a color $i\in \{1,2,3\}$ not present among them. In this case cover $T$ with the single clique $\{a,b,c\}$ of the color $\lambda(\{a,b,c\}) = i$.

If in the resulting cover some vertex $v$ is covered by a single triangle $T$, add the single-vertex clique $\{v\}$ with an appropriate color $\lambda(\{v\}) \ne \lambda(T)$. Thus, the constructed cover is a separating equivalent $k$-cover of $G$. This concludes the proof.
\end{proof}

Theorem \ref{thm:fractaldelta3} states that subcubic 1-fractals are reducible to the diamond and odd cycles, while subcubic 2-fractals could be reduced to class 2 graphs. The next theorem will demonstrate, that cubic 2-fractals could be reduced to snarks.

Let $G$ be a cubic graph with $\dim_L(G) = 2$. In this case every vertex of $G_{-3}$ has the degree $0$, $1$ or $3$. Vertices of degree 1 are further referred to as {\it pendant vertices}, and edges incident to pendant vertices as {\it pendant edges}. We will establish the deeper relation between the topology of general cubic fractals and snarks. By Theorem \ref{thm:fractaldelta3}, the case of 1-fractals is rather simple, so we will concentrate on 2-fractals. Thus, we will assume that $G$ contains a claw. Consider the following graph operations:

\begin{itemize}
    \item[O1)] {\it Pendant triple contraction} consists in replacement of pendant vertices $u$, $v$ and $w$ by a single vertex $x$, which is adjacent to all neighbors of $u$, $v$ and $w$ (Fig. \ref{fig:petersen}).
    \item[O2)] {\it Pendant edge identification} of two edges $u_1v_1$ and $u_2v_2$ with $\deg(v_1) = \deg(v_2) = 1$ and $u_1\ne u_2$ consists in removal of $v_1$ and $v_2$ and  replacement of $u_1v_1$ and $u_2v_2$ with the edge $u_1u_2$ (Fig. \ref{fig:petersen}).
\end{itemize}



Let $G'_{-3}$ be the graph obtained from $G_{-3}$ by removal of isolated vertices and edges. $G_{-3}$ is of class 1 if and only if so is $G'_{-3}$.

\begin{lemma}\label{lemma:pendcolor}
Suppose that $G'_{-3}$ is of class 1. Let $\lambda$ be its 3-edge coloring and $p_1$,$p_2$ and $p_3$ be the numbers of pendant edges with colors 1,2 and 3, respectively. Then $p_1$,$p_2$, $p_3$ are either all odd or all even.
\end{lemma}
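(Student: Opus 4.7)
My plan is to use a handshaking--style parity argument applied to each color class of the $3$--edge coloring $\lambda$.

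First, I would pin down the structure of $G'_{-3}$. Since $G$ is cubic, for any vertex $v$ with neighbors $a,b,c$: if no two of $a,b,c$ are adjacent, $v$ keeps degree $3$ in $G_{-3}$; if exactly one pair is adjacent, $v$ lies in a unique triangle and is left with degree $1$ in $G_{-3}$; if two or more pairs are adjacent, $v$ becomes isolated. So the degree sequence of $G_{-3}$ consists only of $0$'s, $1$'s, and $3$'s. Deleting isolated vertices and isolated edges (the latter having both endpoints of degree $1$) yields $G'_{-3}$ in which every pendant vertex is adjacent to an interior vertex of degree $3$. Let $d_3$ denote the number of degree--$3$ vertices in $G'_{-3}$.

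Next, fix a color $i \in \{1,2,3\}$ and consider the matching $E_i = \lambda^{-1}(i)$. I would double-count incidences between vertices and edges of $E_i$: this total equals $2|E_i|$. On the other hand, every interior vertex of $G'_{-3}$ has all three colors incident (since it has degree $3$ and $\lambda$ is a proper edge coloring), contributing exactly $1$ to the count; every pendant vertex contributes $1$ if its unique incident edge has color $i$, and $0$ otherwise. Thus
\begin{equation*}
d_3 + p_i \;=\; 2|E_i|,
\end{equation*}
so $p_i \equiv d_3 \pmod 2$. Since this congruence holds uniformly for $i = 1,2,3$, the three integers $p_1, p_2, p_3$ all share the parity of $d_3$ and in particular have the same parity as one another.

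There is no real obstacle here; the only point that requires care is verifying the degree structure of $G'_{-3}$ (namely, that after deleting isolated vertices and isolated edges of $G_{-3}$ the remaining vertices have degree exactly $1$ or $3$), because this is what makes the per-color contribution of each interior vertex equal to exactly $1$ on the nose, and hence makes the count $d_3 + p_i$ rather than something less uniform.
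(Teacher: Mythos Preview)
Your proof is correct, and it takes a genuinely different route from the paper's. The paper first performs pendant edge identifications within each color class to reduce to an auxiliary graph $G''_{-3}$ having at most one pendant edge of each color, and then argues via the two-colored subgraphs $H_{i,j}$ (which are disjoint unions of even cycles and at most one path) that the residues $\beta_i$ and $\beta_j$ must coincide. Your argument avoids this reduction entirely: once you have established that every vertex of $G'_{-3}$ has degree $1$ or $3$, the double count $d_3 + p_i = 2|E_i|$ immediately pins all three $p_i$ to the parity of $d_3$. Your approach is shorter and more elementary; the paper's approach, by contrast, rehearses the identification operations that are reused in the proof of the theorem that follows, so it serves a mild expository purpose there.
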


\begin{proof}
Let $p_i = 2\alpha_i + \beta_i$, where $\beta_i = p_i\mod 2$.  For each color $i$, consider $\alpha_i$ pairs of $i$-colored pendant edges, identify the edges from each pair, and assign to each newly added edge the color $i$. So, the resulting graph $G''_{-3}$ is also of class 1, has all vertex degrees equal to 1 or 3 and contains $\beta_i$ pendant edges of color $i$. 

Now consider a subgraph $H_{i,j}$ of $G''_{-3}$ formed by edges of colors $i$ and $j$. Obviously, $H_{i,j}$ is a disjoint union of even cycles and, possibly, a single path with distinctly colored end-edges. If the path is not present, then $\beta_i = \beta_j = 0$, otherwise $\beta_i = \beta_j = 1$.
\end{proof}

\begin{theorem}\label{thm:fraccubic}
  The cubic graph $G$ is 2-fractal if and only if it contains a claw and any cubic graph obtained from $G_{-3}'$ by pendant edge identifications and pendant triple contractions either has a bridge or is a snark.
\end{theorem}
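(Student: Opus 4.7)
My plan is to reduce the theorem to Theorem~\ref{thm:fractaldelta3}(2) via two preliminary observations. First, a connected cubic graph is of class 2 if and only if it has a bridge or is a snark. Indeed, in a cubic graph every bridge endpoint is also a cut vertex and vice versa, so being biconnected coincides with being bridgeless, which makes a bridgeless cubic class 2 graph exactly a snark. Moreover, every cubic graph with a bridge is class 2: in any proper 3-edge coloring of a cubic graph each color class is a perfect matching, the two sides of the bridge each contain an odd number of vertices, so every perfect matching must contain the bridge, and the bridge can lie in only one color class. Second, deleting isolated vertices and edges does not affect $\chi'$, so $G_{-3}$ is class 2 iff $G'_{-3}$ is class 2. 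Combined with Theorem~\ref{thm:fractaldelta3}(2), it then suffices to prove the reciprocal claim: \emph{$G'_{-3}$ is of class 1 if and only if some sequence of O1 and O2 operations produces a class 1 cubic graph}.

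For the easier direction of the reciprocal claim, I would start from a cubic output graph $H$ with a proper 3-edge coloring $\mu$ and pull $\mu$ back through the sequence of operations in reverse. Undoing an O2 is done by splitting the identified edge $u_1u_2$ into two pendant edges $u_1v_1, u_2v_2$, both inheriting the color of $u_1u_2$. Undoing an O1 at a vertex $x$ is done by splitting $x$ into three pendant vertices, each pendant edge inheriting the color of the corresponding edge at $x$; these three colors are distinct since $\mu$ was proper. Each reverse step preserves properness, so the final coloring is a proper 3-edge coloring of $G'_{-3}$.

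For the forward direction, I would fix a proper 3-edge coloring $\lambda$ of $G'_{-3}$ with pendant color counts $p_1,p_2,p_3$. By Lemma~\ref{lemma:pendcolor}, the $p_i$ are either all odd or all even. In the all-odd case I first apply O1 to one pendant of each color: the three pendants merge into a new degree-3 vertex whose three incident edges have distinct colors, so $\lambda$ extends properly and the remaining pendant counts become all even. I then repeatedly apply O2 to pairs of same-colored pendant edges, each such operation identifying the two pendants into a single edge that inherits their common color; at every surviving endpoint the three colors remain distinct, so properness is preserved. Once all pendant edges have been processed, no pendants remain, the graph is cubic, and the inherited coloring is a proper 3-edge coloring, so the resulting cubic graph is class 1.

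The main obstacle is ensuring that the forward procedure can always be completed — this is precisely where Lemma~\ref{lemma:pendcolor} is essential, since the parity constraints on $p_1,p_2,p_3$ match exactly the ``input'' patterns consumed by O1 (one pendant of each color) and O2 (two pendants of the same color), so no pendants are ever stranded. A minor secondary concern is that O1/O2 can alter connectivity, but the class-2 characterization via ``bridge or snark'' is robust to this because bridges in any resulting connected component already force class 2 of the whole output. Assembling the reciprocal claim with the two preliminary reductions then yields the theorem.
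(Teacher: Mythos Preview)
Your proposal is correct and follows essentially the same approach as the paper: both reduce to Theorem~\ref{thm:fractaldelta3}(2), reformulate the statement as ``$G'_{-3}$ is class~1 iff some sequence of O1/O2 yields a class~1 cubic graph,'' and then prove this via a color-preserving construction using Lemma~\ref{lemma:pendcolor} in one direction and a pull-back of the coloring in the other. The only cosmetic difference is the order of operations in the forward direction (you apply O1 first when the $p_i$ are odd and then pair off with O2, whereas the paper pairs off with O2 first and applies O1 to the three leftover pendants), which is immaterial.
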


\begin{proof}
It can be easily shown that if a cubic graph has a bridge, then it is of class 2 \cite{chladny2010factorisation}. 
Thus, the statement of the theorem is equivalent to the following statement: the cubic graph $G$, which contains a claw, is not 2-fractal if and only if it is possible to construct a cubic graph of class 1  from $G_{-3}'$ by pendant edge identifications and pendant triple contractions. 

To prove the necessity, suppose that $G$ is not 2-fractal, e.g. the graph  $G'_{-3}$ is of class 1. Consider any 3-edge-coloring of $G'_{-3}$ and identify pendant edges of the same color, as described in Lemma \ref{lemma:pendcolor}. If after this operation all pendant edges are eliminated, then the desired graph $H$ of class 1 is constructed. Otherwise, by Lemma \ref{lemma:pendcolor} $H$ contains 3 pendant edges of pairwise distinct colors. Then the desired graph can be constructed by contracting  the pendant end-vertices of these edges.

Conversely, suppose that the graph $H$ of class 1 is obtained from $G_{-3}'$ by pendant edge identifications and pendant triple contractions. Consider any 3-edge-coloring $\lambda$ of $H$. Obviously, $\lambda$ could be transformed into a 3-edge coloring of $G_{-3}'$ by assigning the color $\lambda(u_2v_2)$ to the identified edges $u_1v_1$ and $u_2v_2$.   
\end{proof}
\vspace{-2ex}
Thus, Theorem \ref{thm:fraccubic} states that the biconnected cubic graph is 2-fractal whenever any sequence of removal of triangle edges, isolated edges and vertices, pendant triple contractions and pendant edge identifications, which preserve the graph connectivity, transforms it into a snark. Fig. \ref{fig:petersen} provides an example of such transformations.

\begin{figure}[h]
\begin{center}
\includegraphics[width=\textwidth] {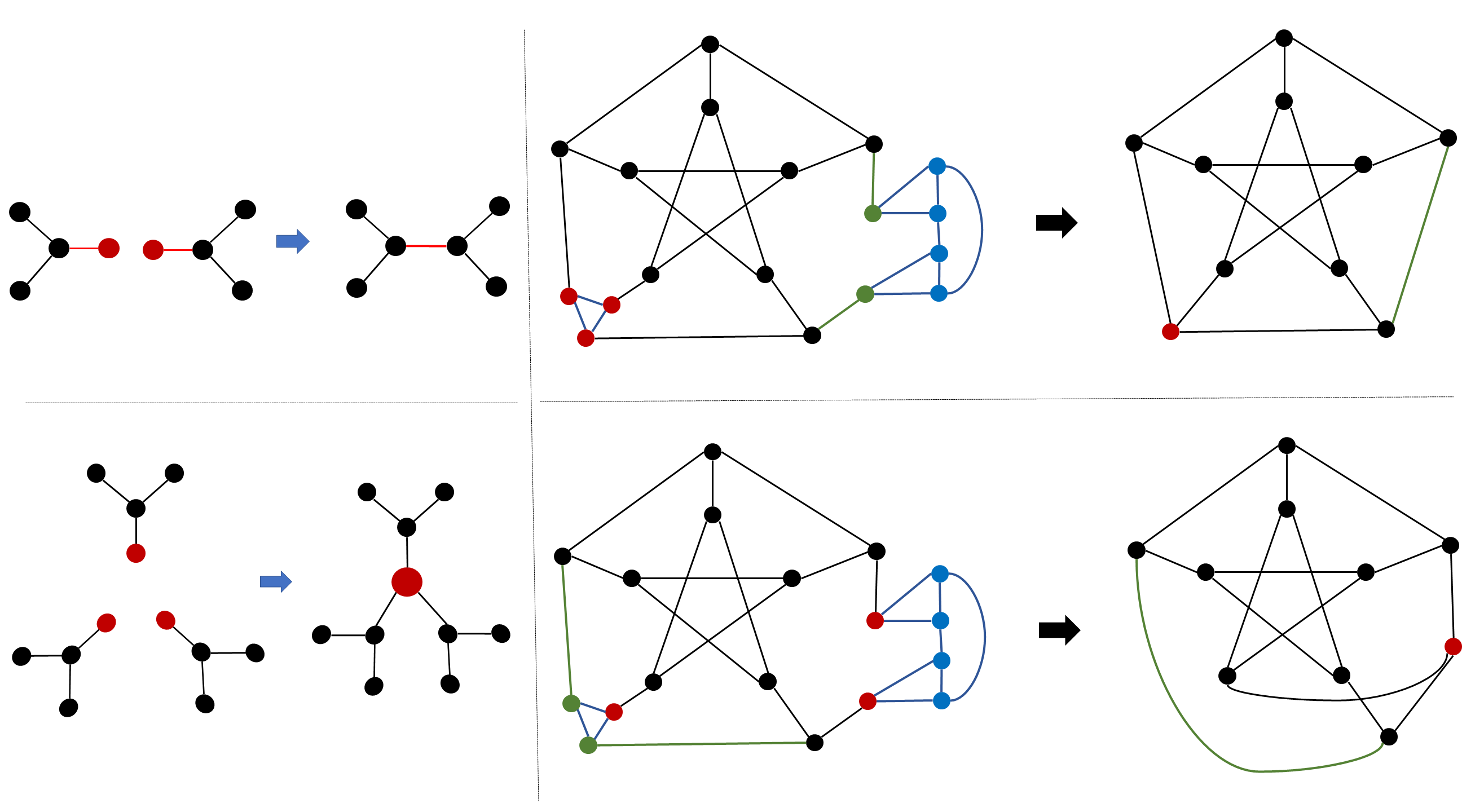}
\caption{\label{fig:petersen} {\footnotesize Left: Pendant triple contraction (top) and pendant edge identification (bottom) operations. Identified vertices and edges are highlighted in red. Right: Transformations of a cubic graph $G$. For each transformation, removed edges are highlighted in blue, vertices involved in the pendant triple contraction are highlighted in red, and vertices and edges involved in the pendant edge identification are highlighted in green. The top transformation converts $G$ into the Petersen snark, which is a fractal. However, $G$ is not fractal, since the bottom transformation converts it into 3-edge-colorable cubic graph.}}
\end{center}
\end{figure}

\subsubsection*{Sierpinski gasket graphs}
Sierpinski gasket graphs $S_n$ \cite{klavvzar2013hamming}
are associated with the Sierpinski gasket - well-known topological fractal with a Hausdorff dimension $\log(3)/\log(2)\approx 1.585$. Edges of $S_n$ are line segments of the $n$-th approximation of the Sierpinski gasket, and vertices are intersection points of these segments (Fig. \ref{fig:serpGasket}). 

Formally, Sierpinski gasket graphs can be defined recursively as follows. Consider tetrads $T_n = (S_n,x_1,x_2,x_3)$, where $x_1,x_2,x_3$ are distinct vertices of $S_n$ called {\it contact vertices}. The first Sierpinski gasket graph $S_1$ is a triangle $K_3$ with vertices $x_1,x_2,x_3$, the first tetrad is defined as $T_1 = (S_1,x_1,x_2,x_3)$. The $(n+1)$-th Sierpinski gasket graph $S_{n+1}$ is constructed from 3 disjoint copies $(S_n,x_1,x_2,x_3)$, $(S'_n,x'_1,x'_2,x'_3)$,  $(S''_n,x''_1,x''_2,x''_3)$ of $n$-th tetrad $T_n$ by gluing together $x_2$ with $x'_1$, $x'_3$ with $x''_2$ and $x_3$ with $x''_1$; the corresponding $(n+1)$-th tetrad is $T_{n+1} = (S_{n+1},x_1,x'_2,x''_3)$.  

\begin{theorem}\label{serpgasket}
For every $n\geq 2$ Sierpinski gasket graph $S_n$ is a fractal with $dim_L(S_n) = 1$ and $dim_H(S_n) = 2$
\end{theorem}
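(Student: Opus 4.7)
The plan is to analyze $S_n$ through its family $\mathcal{T}$ of \emph{elementary} triangles, i.e.\ the $3^{n-1}$ copies of $K_3$ produced at the deepest level of the recursive construction. Three facts follow easily by induction on $n$ from the recursive definition: (i) every edge of $S_n$ lies in exactly one elementary triangle; (ii) every vertex of $S_n$ lies in at most two of them (the three outer contact vertices lie in exactly one, every other vertex in exactly two); (iii) any two distinct elementary triangles overlap in at most one vertex. From (i) and (ii), $\mathcal{T}$ is a clique $2$-cover, so Theorem \ref{thm:covervsinter} gives $\dim_R(S_n)\le 2$ and hence $\dim_L(S_n)\le 1$ by (\ref{kdimeqleb}); equality holds because $S_n$ is not a disjoint union of cliques for $n\ge 2$.

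For $\dim_H(S_n)\le 2$ I will upgrade $\mathcal{T}$ to a separating equivalent $3$-cover, which suffices by Theorem \ref{thm:pdimcharact}. The equivalence requirement reduces to a proper $3$-coloring of the graph $T(S_n)$ whose vertices are the elementary triangles and whose edges connect pairs of triangles sharing a vertex of $S_n$. By (ii), $\Delta(T(S_n))\le 3$; the graph is connected (any two elementary triangles can be linked along a path in $S_n$ using (i)), and it has $3^{n-1}\ne 4$ vertices so it is not $K_4$. Brooks' theorem (together with the trivial observation $\chi(K_3)=3$ in the base case $n=2$) then yields $\chi(T(S_n))\le 3$. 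Separation of the resulting cover is automatic: by (ii) each elementary triangle contains at most one degree-$2$ vertex of $S_n$, so for any two vertices $u,v$ lying in a common triangle $T$ at least one of them, say $v$, lies in a second triangle $T'\ne T$; by (iii), $u\notin T'$, hence $T'$ separates $u$ from $v$.

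For $\dim_H(S_n)\ge 2$ I will reduce to $S_2$. Since $S_2$ is an induced subgraph of $S_n$ for every $n\ge 2$ (obtained iteratively via the recursive construction) and the restriction of any separating equivalent $k$-cover of $S_n$ to $V(S_2)$ is again such a cover, it suffices to rule out a separating equivalent $2$-cover of $S_2$. Let $T_1,T_2,T_3$ be the three elementary triangles of $S_2$ and let $y\in T_i\cap T_j$ be any of its three glue vertices. The key local claim is that $y$ has degree $4$ and lies only in the two maximal cliques $T_i,T_j$, so in an equivalent $2$-cover the at most two clusters through $y$ (one per color) must jointly cover all four neighbors of $y$, forcing them to be precisely $T_i$ and $T_j$ in opposite colors. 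Applying this at each of the three glue vertices forces $T_1,T_2,T_3$ to appear as clusters with pairwise distinct colors -- impossible with only two colors. Pinning down this local forcing statement is the main technical step; once it is in hand the contradiction and the upper bounds all fall out immediately.
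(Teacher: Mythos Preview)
Your overall strategy is sound and, for the upper bound $\dim_H(S_n)\le 2$, arguably cleaner than the paper's: where the paper carries an explicit inductive $3$-colouring of the elementary triangles through the recursive gluing, you observe once and for all that the triangle–intersection graph $T(S_n)$ has $\Delta\le 3$ and is neither $K_4$ nor an odd cycle for $n\ge 3$, so Brooks' theorem does the work. The paper's lower bound is also different and shorter: it simply notes that $S_n$ contains the diamond $K_4-e$ as an induced subgraph and that $\dim_P(\overline{K_4-e})=3$, then uses monotonicity of $\dim_P(\overline{\cdot})$ under induced subgraphs (which is exactly your restriction observation). Your route via a direct analysis of $S_2$ is legitimate, but see below.

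There is one genuine error in your lower-bound step. In $S_2$ the three glue vertices $g_{12},g_{13},g_{23}$ are pairwise adjacent (each pair lies in one of the three elementary triangles), so they form a fourth maximal triangle --- the ``inner'' downward triangle. Hence your key local claim that a glue vertex $y$ ``lies only in the two maximal cliques $T_i,T_j$'' is false: $y=g_{12}$ lies in $T_1$, $T_2$, and $\{g_{12},g_{13},g_{23}\}$. Fortunately your \emph{conclusion} survives, for a slightly different reason: the open neighbourhood $N(y)$ induces the path $o_1\!-\!g_{13}\!-\!g_{23}\!-\!o_2$, so every clique through $y$ covers at most two neighbours of $y$; covering all four with only two clusters therefore forces each to cover two \emph{disjoint} edges of that $P_4$, and the only such pair of triangles is $\{y,o_1,g_{13}\}=T_1$ and $\{y,g_{23},o_2\}=T_2$. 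Replace the ``only two maximal cliques'' claim by this $P_4$ observation and the forcing argument (and hence the $3$-colour contradiction) goes through unchanged. Alternatively, you can shortcut the whole step by noting, as the paper does, that the four vertices $o_1,g_{12},g_{13},g_{23}$ already induce a diamond.
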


\begin{proof}
First, we will prove that 

\begin{equation}\label{formula:dimssn}
dim_R(S_n)\leq 2, dim_P(\overline{S_n})\leq 3.
\end{equation}

\noindent
for every $n\geq 2$. We will show it using an induction by $n$. In fact, we will prove slightly stronger fact: for any $n\geq 2$ there exists a clique cover $\mathcal{C} = \{C_1,...,C_m\}$ such that (i) every non-contact vertex is covered by two cliques from $\mathcal{C}$; (ii) every contact vertex is covered by one clique from $\mathcal{C}$; (iii) cliques from $\mathcal{C}$ can be colored using 3 colors in such a way, that intersecting cliques receive different colors and cliques containing different contact vertices also receive different colors; (iv) every two distinct vertices are separated by some clique from $\mathcal{C}$.

For $n=2$ the clique cover $\mathcal{C}$ consisting of 3 cliques, that contain contact vertices, obviously satisfies conditions (i)-(iv). Now suppose that $\mathcal{C}$, $\mathcal{C'}$ and $\mathcal{C''}$ are clique covers of $S_n$, $S'_n$ and $S''_n$ with properties (i)-(iv). Assume that $x_i\in C_i$, $x'_i\in C'_i$. $x''_i\in C''_i$ and $C_i,C'_i,C''_i$ have colors $i$, $i=1,...,3$. Then it is straightforward to check, that $\mathcal{C}\cup \mathcal{C'}\cup \mathcal{C''}$ with all cliques keeping their colors is a clique cover of $S_{n+1}$ that satisfies (i)-(iv). So, (\ref{formula:dimssn}) is proved.

Finally, note that for every $n\geq 2$ the graph $S_n$ contains graphs $K_{1,2}$ and $K_4 - e$ as induced subgraphs. Since $dim_R(K_{1,2}) = 2$ and $dim_P(\overline{K_4 - e}) = 3$ (the latter is easy to see using clique cover definition), we have equalities in (\ref{formula:dimssn})
\end{proof}

\begin{figure}[h]
\begin{center}
\includegraphics[width=\textwidth,height=\textheight,keepaspectratio] {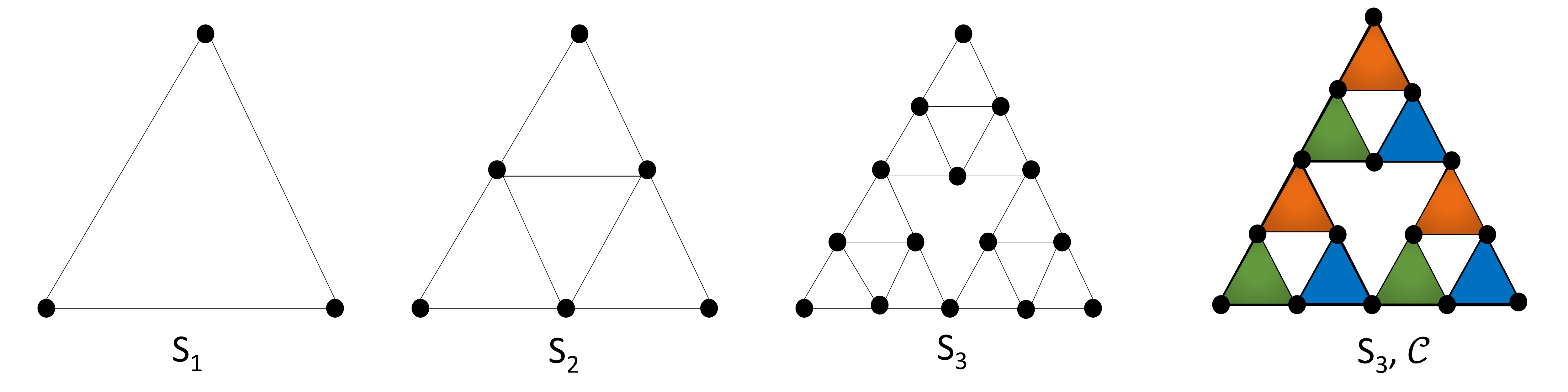}
\caption{\label{fig:serpGasket} {\small Sierpinski gasket graphs $S_1-S_3$ and the optimal equivalent separating $3$-cover of $S_3$. Clusters of the same color are highlighted in red, green and blue. $S_3$ is a fractal: every vertex is covered by 2 clusters, while the clusters can be colored using 3 colors.}}
\end{center}
\end{figure}


\section{Information-theoretic connections}\label{sec:inf}

Theorems \ref{thm:covervsinter} and \ref{thm:pdimcharact} allow to interpret graph Lebesgue and Hausdorff dimensions and fractality from the information-theoretical point of view. Indeed, graphs $G$ with $dim_L(G) = k$ could be described by assigning to every vertex $k-1$ a {\it set} of integer "coordinates" represented by hyperedges of a $k$-uniform hypergraph $H$ such that $G=L(H)$. Importantly, these coordinates are non-ordered, and edges of $G$ are defined by a presence of a shared coordinate for their end vertices. In contrast, graphs with $dim_H(G) = k$ are defined by ordered {\it vectors} of coordinates (Theorem \ref{thm:pdimcharact},4)), and an adjacency of a pair of vertices is determined by a presence of a shared coordinate on the same position. Thus, non-fractal graphs are the graphs for which the set and vector representations are equivalent, while fractal graphs have additional structural properties that manifest themselves in extra dimensions needed to describe them using a vector representation.

Relations between graph dimension and information complexity could be analyzed using a {\it Kolmogorov complexity}. Informally, Kolmogorov complexity of a string $s$ could be described as a length of its shortest lossless encoding. 
Formally, let $\mathbb{B}^*$ be the set of all finite binary strings and $\Phi:\mathbb{B}^* \rightarrow \mathbb{B}^*$ be a computable function. {\it Kolmogorov complexity} $K(s) = K_{\Phi}(s)$ of a binary string $s$ with respect to $\Phi$ is the minimal length of a string $s'$ such as $\Phi(s') = s$. Since Kolmogorov complexities with respect to any two functions differ by an additive constant \cite{li2009Kolmogorov}, it can be assumed that some canonical function $\Phi$ is fixed. For two strings $s,t\in \mathbb{B}^*$, a {\it conditional Kolmogorov complexity} $K(s|t)$ is a a length of a shortest encoding of $s$, if $t$ is known in advance. 

Every connected graph $G$ can be encoded using the string representation of an upper triangle of its adjacency matrix. Kolmogorov complexity $K(G)$ of a graph $G$ could be defined as a Kolmogorov complexity of that string \cite{mowshowitz2012entropy}. In addition, the conditional graph Kolmogorov complexity $K(G|n)$ is often considered, which is the complexity given that the number of vertices is known.   Obviously, $K(G) = O(n^2)$ and $K(G|n) = O(n^2)$.  Alternatively, $n$-vertex connected labeled graph can be represented as a list of edges with ends of each edge encoded using their binary representations concatenated with a binary representation of $n$. It gives estimations $K(G) \leq 2m\log(n) + \log(n) = O(m\log(n))$, $K(G|n) \leq 2m\log(n) = O(m\log(n))$ \cite{li2009Kolmogorov,mowshowitz2012entropy}.

Let $dim_H(G) = dim_P(\overline{G}) - 1 = d-1$ and $\mathcal{H}^d(G) = h$. Then $\overline{G}$ is an induced subgraph of a product 
\begin{equation}\label{graphembedmin}
K_{p_1}\times\dots \times K_{p_d},
\end{equation}

\noindent
where $h = p_1\cdot...\cdot p_d$. Thus, by Theorem \ref{thm:pdimcharact}, $G$ and $\overline{G}$ could be encoded using a collection of vectors $\phi(v) = (\phi_1(v),\dots,\phi_d(v))$, $v\in V(G)$, $\phi_j(v)\in [p_j]$. Such encoding could be stored as a string containing binary representations of coordinates $\phi_j(v)$ using $\log(p_j)$ bits concatenated with a binary representations of $n$ and $p_j$, $j=1,...,n$. The length of this string is $(n+1)\sum_{j=1}^d \log(p_j) + \log(n)$. Analogously, if $n$ and $p_j$ are given, then the length of encoding is $n\sum_{j=1}^d \log(p_j)$. Thus, the following estimations hold:

\begin{proposition}
\begin{equation}\label{eq:KolG}
K(G)\leq (n+1)\log(\mathcal{H}^d(G)) + \log(n)
\end{equation}
\vspace{-2ex}
\begin{equation}\label{eq:KolcondG}
K(G|n,p_1,...,p_d)\leq n\log(\mathcal{H}^d(G))
\end{equation}

\end{proposition}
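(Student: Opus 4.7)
The plan is to give an explicit self-delimiting description of $G$ based on the coordinate embedding guaranteed by the product dimension, and then count bits.

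First I would invoke Theorem~\ref{thm:pdimcharact}(4) with $d = dim_P(\overline{G})$ to obtain an injective mapping $\phi : V(G) \to [p_1] \times \cdots \times [p_d]$ such that $uv \in E(G)$ if and only if $\phi_j(u) = \phi_j(v)$ for some $j$. By the definition of $\mathcal{H}^d(G)$ as the minimum volume over all such embeddings, I may assume $p_1 \cdots p_d = \mathcal{H}^d(G)$. Once an enumeration $v_1,\ldots,v_n$ of $V(G)$ is fixed, the list $\phi(v_1),\ldots,\phi(v_n)$ determines $G$ completely: a universal machine that already knows $n$ and $p_1,\ldots,p_d$ can recover the adjacency matrix by testing the coordinate-coincidence rule.

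Next I would encode each coordinate $\phi_j(v_i)$ in binary using exactly $\lceil \log p_j \rceil$ bits and concatenate the results in lexicographic order of $(i,j)$. Because the decoder already holds $n$ and the $p_j$, it slices this flat string unambiguously into the intended vectors. The total length is $n \sum_{j=1}^d \lceil \log p_j \rceil$, which is
\[
n \log\bigl(\mathcal{H}^d(G)\bigr)
\]
up to the $O(nd)$ rounding slack absorbed into the additive constant inherent in $K$. This gives the second displayed bound $K(G \mid n, p_1, \ldots, p_d) \leq n \log \mathcal{H}^d(G)$.

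For the first inequality the description must be self-contained, so I would prepend a header encoding $n$ and $p_1,\ldots,p_d$. A standard self-delimiting coding of these $d+1$ integers uses $\log n + \sum_{j=1}^d \log p_j = \log n + \log \mathcal{H}^d(G)$ bits up to additive constants. Combining the header with the body of length $n \log \mathcal{H}^d(G)$ produces the overall bound $(n+1)\log \mathcal{H}^d(G) + \log n$. The bit-counting itself is routine; the only conceptual step, and thus the real content of the argument, is the invocation of Theorem~\ref{thm:pdimcharact}(4), which is what certifies that the vectors $\phi(v_i)$ constitute a lossless encoding of $G$. Everything else — the fixed-width binary representation of each coordinate, the self-delimiting header, and the parser's reliance on the already-known $p_j$ — is standard Kolmogorov-complexity bookkeeping, and the two bounds follow by arithmetic on the resulting bit counts.
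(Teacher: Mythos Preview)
Your argument is correct and follows essentially the same route as the paper: invoke the vector representation from Theorem~\ref{thm:pdimcharact}(4), encode each coordinate $\phi_j(v)$ in $\log p_j$ bits, and sum to get $n\sum_j\log p_j = n\log\mathcal{H}^d(G)$ for the conditional bound, then prepend binary representations of $n$ and the $p_j$ (contributing $\log n + \sum_j\log p_j$ bits) for the unconditional bound. The paper presents this argument in the paragraph immediately preceding the proposition rather than as a separate proof, and you are slightly more explicit about self-delimiting headers and rounding, but the substance is identical.
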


Let $p^* = \max_j p_j$. Then we have $K(G)\leq (n+1)d\log(p^*) + \log(n),$ $K(G|n,p_1,...,p_d)\leq nd\log(p^*).$ By minimality of the representation (\ref{graphembedmin}), we have $p^* \leq n$. Thus $K(G) = O(dn\log(n))$, $K(G|n,p_1,...,p_d) = O(dn\log(n))$. So, Hausdorff (Prague) dimension could be considered as a measure of descriptive complexity of a  graph. 


Relations between Hausdorff (Prague) dimension and Kolmogorov complexity could be used to derive lower bound for Hausdorff dimension of a sparse  Erdős–Rényi random graph. Formally, let $X$ be a graph property and $\mathcal{P}_n(X)$ be the set of labeled n-vertex graphs having this property. The property $X$ holds for {\it almost all graphs} \cite{erdHos1977chromatic}, if $|\mathcal{P}_n(X)|/2^{\binom{n}{2}} \rightarrow 1$ as $n \rightarrow \infty$, i.e. the probability that the sparse  Erdős–Rényi random graph $G(n,\frac{1}{2})$ has the property $X$ converges to 1 as $n \rightarrow \infty$.  We will use the following lemma:



\begin{lemma}\label{lem:kolcomplbgr}\cite{buhrman1999kolmogorov} For every $n > 0$ and $\delta: \mathbb{N}\rightarrow \mathbb{N}$, there are at least $2^{\binom{n}{2}}(1-2^{-\delta(n)})$ $n$-vertex labeled graphs $G$ such that $K(G|n) \geq \frac{n(n-1)}{2} - \delta(n)$.
\end{lemma}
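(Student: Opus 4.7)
The plan is to prove this by a standard counting (pigeonhole) argument from algorithmic information theory: short descriptions are scarce, so most objects must be incompressible. The only ingredients I would need are (i) the exact count of labeled $n$-vertex graphs and (ii) the trivial fact that the number of binary strings of length strictly less than $L$ is less than $2^L$.

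First, I would observe that a labeled graph on $n$ vertices is completely determined by the upper-triangular part of its adjacency matrix, which is a binary string of length exactly $\binom{n}{2}$. Consequently the total number of labeled $n$-vertex graphs is $2^{\binom{n}{2}}$. Next, set $L = \binom{n}{2} - \delta(n)$ and consider the set $\mathcal{B}$ of graphs with $K(G \mid n) < L$. By the definition of conditional Kolmogorov complexity (with the canonical reference function $\Phi$ fixed as in the preceding paragraph of the paper), each $G \in \mathcal{B}$ is the output of $\Phi$ on some binary program $s'$ of length $|s'| < L$, with $n$ supplied as side information. Since distinct graphs require distinct programs, this yields the injection-based bound
\begin{equation*}
|\mathcal{B}| \;\leq\; \sum_{\ell=0}^{L-1} 2^{\ell} \;=\; 2^{L} - 1 \;<\; 2^{L} \;=\; 2^{\binom{n}{2}} \cdot 2^{-\delta(n)}.
\end{equation*}
Subtracting from the total count of labeled $n$-vertex graphs produces at least $2^{\binom{n}{2}}\bigl(1 - 2^{-\delta(n)}\bigr)$ graphs satisfying $K(G \mid n) \geq \binom{n}{2} - \delta(n)$, which is precisely the claimed lower bound.

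I do not anticipate any serious obstacle here: the only subtlety is keeping the model of computation consistent, so that the inequality ``$K(G \mid n) < L$'' is interpreted as the existence of a binary string $s'$ with $|s'| < L$ and $\Phi(s', n)$ equal to the canonical encoding of $G$, rather than being conflated with unconditional complexity or with a different reference machine. Because the bound $2^L - 1$ on the number of binary strings of length less than $L$ is already crude, there is no need to worry about prefix-free or self-delimiting codes, and any additive encoding overhead is absorbed automatically. The whole proof therefore reduces to a one-line pigeonhole count once the definitions are unpacked.
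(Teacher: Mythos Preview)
Your argument is correct and is exactly the standard incompressibility counting argument. Note that the paper does not actually supply a proof of this lemma: it is stated with a citation to \cite{buhrman1999kolmogorov} and then invoked as a black box in the proof of Theorem~\ref{thm:almostalldim}. Your write-up fills in that gap with the expected one-line pigeonhole count, and there is nothing to compare against.
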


The following theorem states that almost all sparse Erdős–Rényi graphs have large Hausdorff dimension:

\begin{theorem}\label{thm:almostalldim}
For every $\epsilon > 0$, almost all sparse Erdős–Rényi graphs have Hausdorff dimension such that

\begin{equation}\label{lowbounddim}
 \frac{1}{1+\epsilon}\Big(\frac{n-1}{2\log(n)} - \frac{1}{n}\Big) - 1 \leq dim_H(G) \leq C\frac{n\log\log (n)}{\log (n)},
\end{equation}

\noindent where $C$ is a constant.
\end{theorem}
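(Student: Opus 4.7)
The plan is to handle the two inequalities in (\ref{lowbounddim}) separately: the lower bound falls out cleanly from the Kolmogorov-complexity machinery just developed, while the upper bound comes from a known combinatorial estimate of the Prague dimension of a random graph.

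For the lower bound, I would start from inequality (\ref{eq:KolcondG}), which gives $K(G \mid n, p_1, \dots, p_d) \leq n \log(\mathcal{H}^d(G))$ where $d = dim_H(G) + 1$ and $p_1, \dots, p_d$ are the side lengths of a minimal product embedding of $\overline{G}$. To convert this into a bound on $K(G \mid n)$ I would prepend a short self-delimiting encoding of $d$ and $p_1, \dots, p_d$; since each $p_j \leq n$ this adds only $O(d \log n) + O(\log d)$ bits. Using the crude estimate $\mathcal{H}^d(G) \leq n^d$, this produces
\begin{equation*}
K(G \mid n) \leq n d \log(n) + O(d \log n) \leq (1+\epsilon)\, d\, n \log(n)
\end{equation*}
for $n$ sufficiently large (depending on $\epsilon$). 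Next I would invoke Lemma \ref{lem:kolcomplbgr} with $\delta(n) = \lceil \log(n) \rceil$: the exceptional set then has density at most $2^{-\delta(n)} = O(1/n) \to 0$, so almost all $n$-vertex graphs satisfy $K(G\mid n) \geq \binom{n}{2} - \log(n)$. Chaining the two inequalities and solving for $d$ yields
\begin{equation*}
dim_H(G) = d - 1 \geq \frac{1}{1+\epsilon}\left(\frac{n-1}{2 \log(n)} - \frac{1}{n}\right) - 1,
\end{equation*}
as required. The only delicate point is making sure the $O(d \log n)$ overhead from encoding the $p_j$'s is absorbed into the $1/(1+\epsilon)$ factor rather than the dominant term, which is true because $d \log n = o(d n \log n)$.

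For the upper bound I would invoke the known fact that, with high probability, $dim_P(G(n,1/2)) = O(n \log \log(n)/\log(n))$. Since the complement of $G(n,1/2)$ has the same distribution as $G(n,1/2)$ itself, the bound applies verbatim to $dim_P(\overline{G})$ and hence, by (\ref{hdimgraph}), to $dim_H(G)$. If a self-contained argument is preferred, I would construct a separating equivalent cover probabilistically: partition the vertex set at random into blocks of size roughly $\log(n)$, note that with high probability each block induces a clique in the complement (because cliques of size $\log(n)$ exist densely in $G(n,1/2)$), and then apply a greedy/Lovász-local-lemma coloring to the resulting cover. A careful accounting of the number of partitions required to separate all pairs gives the additional $\log \log(n)$ factor.

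The main obstacle, and the step I expect to require the most care, is the upper bound: establishing $O(n \log \log(n)/\log(n))$ from scratch needs a probabilistic construction of covering cliques of near-logarithmic size together with concentration bounds on the number of such cliques through any fixed edge, which is considerably more substantial than the lower bound. By contrast, the lower bound is essentially a bookkeeping exercise once (\ref{eq:KolcondG}) and Lemma \ref{lem:kolcomplbgr} are in hand, and the key ratio $\binom{n}{2}/(n \log n) = (n-1)/(2 \log n)$ already matches the leading term the theorem asks for.
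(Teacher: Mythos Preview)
Your proposal is correct and follows essentially the same route as the paper: the upper bound is taken from the known estimate on the Prague dimension of $G(n,1/2)$ (the paper simply cites \cite{cooper2010product}), and the lower bound is obtained by combining the encoding bound $K(G)\leq (1+\epsilon)\,d\,n\log n$ with Lemma~\ref{lem:kolcomplbgr} at $\delta(n)=\log n$ and solving for $d$. The only cosmetic difference is that you bound $K(G\mid n)$ directly by prepending an encoding of the $p_j$'s, whereas the paper bounds $K(G)$ via (\ref{eq:KolG}) and then invokes $K(G\mid n)\leq K(G)$; the resulting arithmetic is identical.
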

\vspace{-4ex}
\begin{proof}
The upper bound has been proved in \cite{cooper2010product}, so we will prove the lower bound.
Let $n_{\epsilon} = \ceil{\frac{2}{\epsilon}}$. Consider a graph $G$ with $n\geq n_{\epsilon}$. From (\ref{eq:KolcondG}) we have $K(G)\leq (n+1)d\log(n) + \log(n)$. Using the fact, that $\frac{1}{n} + \frac{1}{nd}\leq \frac{2}{n}\leq \epsilon$, it is  straightforward to check that $(n+1)d\log(n) + \log(n) \leq (1+\epsilon)nd\log(n)$. Therefore we have

\begin{equation}\label{eq:kolmepsilon}
K(G)\leq (1+\epsilon)nd\log(n)
\end{equation}

Let $X$ be the set of all graphs $G$ such that 

\begin{equation}\label{eq:kolmcondlog}
K(G|n) \geq \frac{n(n-1)}{2} - \log(n)
\end{equation}

Using Lemma \ref{lem:kolcomplbgr} with $\delta(n) = \log(n)$, we conclude that  $|\mathcal{P}_n(X)|/2^{\binom{n}{2}} \geq 1-\frac{1}{n}$, and so almost all graphs have the property $X$.

Now it is easy to see that for graphs with the property $X$ and with $n\geq n_{\epsilon}$ the inequality (\ref{lowbounddim}) holds. It follows by combining inequalities (\ref{eq:kolmepsilon})-(\ref{eq:kolmcondlog}) and using the fact that $K(G|n)\leq K(G)$. 
\end{proof}

\section{Fractality and self-similarity of networks: experimental study}\label{fracgraph:exper}

\subsubsection*{Calculation of Lebesgue and Hausdorff dimensions} The problems of calculating Hausdorff and Lebesgue dimension of graphs are algorithmically hard. Indeed, the problem of verifying whether $\dim_L(G) \leq k$ is NP-complete for $k > 2$ \cite{poljak1981complexity} (the complexity for  $k = 2$ is unknown).
It is easy to see that the problems of checking whether $\dim_L(G) \leq k$  and  deciding whether a given graph is a fractal are also NP-complete. It follows from Proposition \ref{prop:class2} and NP-completness of the edge chromatic number problem for triangle-free cubic graphs \cite{holyer1981np}. Therefore we use Integer Linear Programming (ILP) for calculation of Hausdorff and Lebesgue dimensions and detection of fractal graphs. Let us call a clique $l$-cover and a separating equivalent $h$-cover {\it optimal}, if $l=\dim_L(G)+1$ and $h=\dim_H(G)+1$.  For Lebesgue dimension, we are looking for an optimal clique cover which consists of a minimal number of clusters. For such cover, every maximal clique of $G$ contains at most one cluster (otherwise, we can join the clusters contained in the same clique). Using this fact, we proceed as follows. Let $\{C_1,...,C_q\}$ be the list of maximal cliques of $G$  found using Bron–Kerbosch algorithm \cite{bron1973algorithm}. Then an optimal clique cover is found by solving the following ILP problem:


\begin{equation}\label{eq:ilplobj}
    z \rightarrow \min
\end{equation}
\vspace{-2ex}
\begin{equation}\label{eq:ilpl1}
  \sum_{j=1}^q x_{i,j} \leq z,\;\;\; i=1,...,n.  
\end{equation}
\vspace{-1ex}
\begin{equation}\label{eq:ilpl2}
  \sum_{j=1}^q y_{e,j} \geq 1,\;\;\; e\in E(G).  
\end{equation}
\vspace{-1ex}
\begin{equation}\label{eq:ilpl3}
  y_{e,j} \leq  x_{u,j}, y_{e,j} \leq  x_{v,j}, y_{e,j} \geq  x_{u,j} + x_{v,j}-1,\;\;\;e = uv\in E(G), j=1,...,q.
\end{equation}
\vspace{-1ex}
\begin{equation}\label{eq:ilplbounds}
    0 \leq x_{i,j} \leq a_{i,j};  0 \leq y_{e,j} \leq b_{e,j};  1 \leq z \leq \Delta,\;\; i=1,...,n, e\in E(G), j=1,...,q.
\end{equation}

Here $z$ is the variable representing the rank dimension of $G$;  the binary variables $x_{i,j}$ and $y_{e,j}$ indicate whether a vertex $i$ and an edge $e$ are covered by a cluster contained in $C_j$; $a_{i,j}$ and $b_{e,j}$ are binary constants indicating whether a corresponding vertex/edge belongs to $C_j$ and $\Delta = \Delta(G)$. The constraints (\ref{eq:ilpl1}) state that every vertex is covered by at most $z$ cliques;  the constraints (\ref{eq:ilpl2}) enforce the requirement that every edge is covered by at least one clique and the constraints (\ref{eq:ilpl3}) ensure that an edge is covered by a clique if and only if both its ends are covered by it.

 As before, we assume that a given graph has no true twins. In this case the Hausdorff dimension of the graph is found by generating the set $\mathcal{C} = \{C_1,...,C_Q\}$ of all cliques of $G$ and solving the following ILP problem:

\begin{equation}\label{eq:ilpobjh}
    \sum_{k=1}^K x_{i,k} \rightarrow \min
\end{equation}
\vspace{-3ex}
\begin{equation}\label{eq:ilph1}
    \sum_{i=1}^Q x_{i,k} \leq Qy_k,\;\;\; k=1,...,K
\end{equation}
\vspace{-1ex}
\begin{equation}\label{eq:ilph3}
    \sum_{k=1}^K x_{i,k} \leq 1,\;\;\; i=1,...,Q
\end{equation}
\vspace{-1ex}
\begin{equation}\label{eq:ilph2}
    \sum_{i: v\in C_i} x_{i,k} \leq 1,\;\;\; k=1,...,K; v\in V(G)
\end{equation}
\vspace{-3ex}
\begin{equation}\label{eq:ilph4}
    \sum_{i: u,v\in C_i} \sum_{k=1}^K x_{i,k} \geq 1,\;\;\; uv\in E(G)
\end{equation}

\noindent Here $K = \Delta(G) + 1$ is an upper bound on the Hausdorff dimension of the graph $G$. The binary variable $x_{i,k}$ indicates whether the clique $C_i$ is colored by a color $k$, and the binary variable $y_k$ indicates whether the color $k$ is used; the relation between these variables is enforced by the constraints (\ref{eq:ilph1}).  The constraints (\ref{eq:ilph2}) state that every clique receives at most one color; it is possible that a clique does not have any color, which means that a clique is not selected as a cluster. By the constraints (\ref{eq:ilph2}), all cliques containing any given vertex $v$ receive different colors, and the constraints (\ref{eq:ilph4}) ensure that at least one of the cliques covering any edge $uv$ receives a color (i.e. selected as a cluster). If the Lebesgue dimension has been previously estimated, then the calculations could be accelerated by removal from $\mathcal{C}$ of all cliques that intersect at most $dim_L(G)$ other cliques.  

For all networks described below  Lebesgue and Hausdorff dimensions were calculated using Gurobi 8.1.1. 

\begin{figure}%
    \centering
    \subfloat{{\includegraphics[width=4cm]{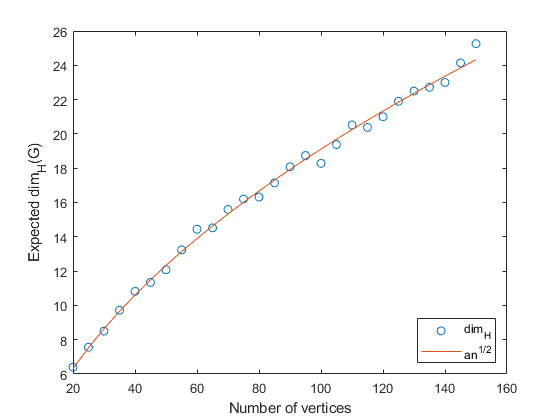} }}%
    \qquad
    \subfloat{{\includegraphics[width=4cm]{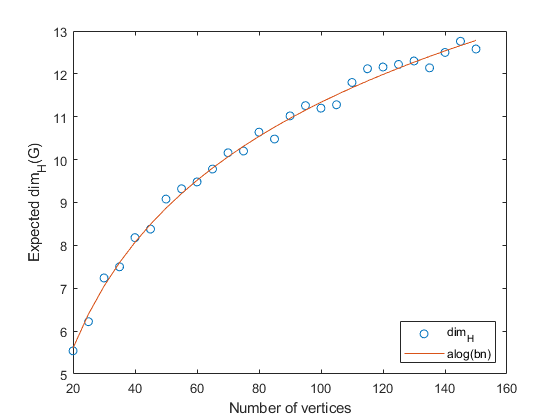} }}%
    \qquad
    \subfloat{{\includegraphics[width=4cm]{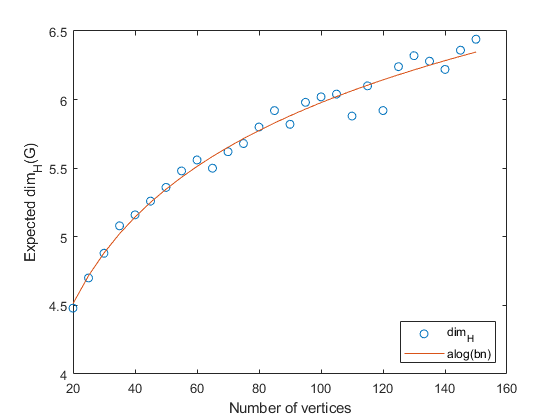} }}%
    \qquad
    \subfloat{{\includegraphics[width=4cm]{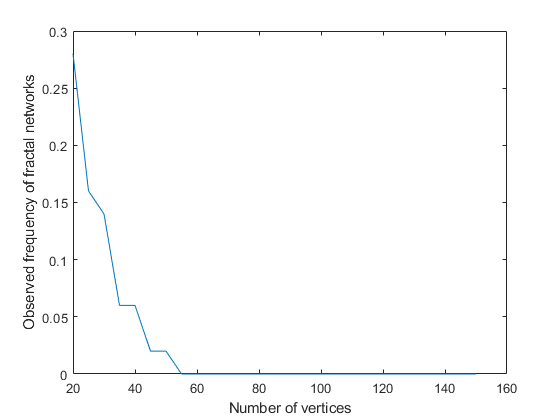} }}%
    \qquad
    \subfloat{{\includegraphics[width=4cm]{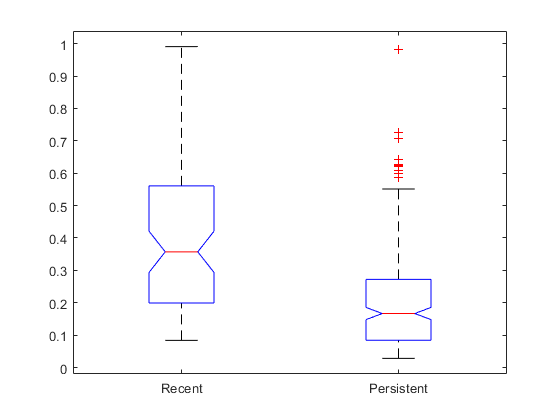} }}%
    \caption{\footnotesize Top: expected Hausdorff dimensions for Preferential Attachment (left), Erd\"{o}s-Renyi (center) and Watts-Strogatz (right) networks. Bottom: (left) observed frequency of fractal networks for Wattz-Strogatz model; (right): distributions of normalized Hausdorff dimensions for genetic networks of recent and persistent intra-host HCV populations.}%
    \label{fig:experModel}%
\end{figure}

\subsubsection*{Network models} Three common models have been considered: preferential attachment, Erd\"{o}s-Renyi and Watts-Strogatz. For each model, 1350 networks with 20-150 vertices have been generated using MIT Matlab Toolbox for Network Analysis \cite{bounova2012overview}. For a given network size, the model parameters were selected in a way resulting in the same network density for all three models. 
 
 For preferential attachment and Erd\"{o}s-Renyi networks, their average Hausdorff dimensions grew as $\Theta(n^{1/2})$ ($R^2 = 0.9958$) and $\Theta(\log(n))$ ($R^2 = 0.9943$), respectively, just as suggested by the estimations in Section \ref{sec:fractheory} (Fig. \ref{fig:experModel}). Hausdorff dimension of Watts-Strogatz networks showed the behaviour similar to that of the latter ($R^2 = 0.9789$).  Importantly, none of the analyzed preferential attachment and Erd\"{o}s-Renyi networks were fractal. In contrast, Watts-Strogatz fractal networks have been observed, although their proportion exponentially decreases with the growth of $n$ (Fig. \ref{fig:experModel}). It suggests, that for the analyzed models the network fractality is rare. It is known that almost all graphs (in the sense of Erd\"{o}s-Renyi graphs $G(n,\frac{1}{2})$) are of class 1 \cite{erdHos1977chromatic}. Thus graph fractality inherits the asymptotic behaviour of edge colorings dichotomy.

\subsubsection*{Real networks with known communities} To calculate Lebesgue and Hausdorff dimensions of a graph $G$, it is required to find the sets of communities of $G$ representing clusters of its optimal $l$-cover and equivalent separating $h$-cover. If the communities $\Omega$ are known in advance, we may consider restricted Lebesgue dimension $dim_L(G,\Omega)$ and restricted Hausdorff dimension $dim_H(G,\Omega)$ with respect to these communities that can be defined as follows: given a hypergraph $H = (V(G),\Omega)$ with all twin vertices removed, $dim_L(G,\Omega) = \Delta(H)$ and $dim_H(G,\Omega) = \chi(L(H))$.
 
 We calculated the restricted dimensions of 8 real-life networks with known ground-truth communities from Stanford Large Network Dataset \cite{snapnets}. To calculate Hausdorff dimensions, the standard ILP formulation for the Vertex Coloring problem has been utilized. If the solver was not able to handle the full community dataset, we analyzed 5000 communities of highest quality provided by the database's curators. 3 out of 8 networks have been found to be fractal. It is significantly higher proportion than suggested by the analysis of network models above, thus suggesting that for real networks the fractality is more prevalent.

\subsubsection*{Viral genetic networks} For a given biological population, the vertices of its genetic network \cite{campo2014next} are genomes of the members of the population, and two vertices are adjacent if and only if the corresponding genomes are genetically close. Genetic network represents a snapshot of the mutational landscape of the population, whose structure is shaped by selection pressures, epistatic interactions and other evolutionary factors \cite{schaper2011epistasis}. 
 
RNA viruses exist in infected hosts as highly heterogeneous populations of genomic variants or {\it quasispecies}. Recently, indications of self-similarity in quasispecies genetic networks were found (D.S. Campo, personal communication). We investigated this phenomenon using the proposed theoretical framework. We considered genetic networks of intra-host Hepatitis C (HCV) populations of $n=355$ infected individuals at early ($n=98$) and persistent ($n=255$) stages of infection \cite{lara2017identification}. The networks were constructed using high-throughput sequencing data of HCV Hypervariable Region 1 (HVR1), with two HVR1 sequences being adjacent, if they differ by a single mutation. For each network, the dimensions of the largest connected component has been calculated with the time limit of $600s$. Solutions have been obtained for $n=323$ networks with $361.71$ vertices in average.
 

The normalized Hausdorff dimensions $\overline{dim}_H(G)$ of networks of persistent populations was found to be significantly lower than for recent populations ($p=3.01\cdot 10^{-13}$, Kruskal-Wallis test, Fig. \ref{fig:experModel}), thus indicating significantly higher level of their self-similarity. This finding is biologically significant. Indeed, one of fundamental questions in the study of pathogens is the role of different evolutionary mechanisms in the infection progression. For HCV, the standard assumption, that the major driving force of intra-host viral evolution is the continuous immune escape, has been put into question by the series of observations that suggest high level of intra-host viral adaptation \cite{campo2014next,gismondi2013dynamic}. Increase in self-similarity of HCV genetic networks implies the gradual self-organization of vital populations and emergence of
structural patterns in population composition and points to the presence of a dynamical mechanism of their formation at later stages of infection, which may be associated with the higher level of adaptation and specialization of viral variants. Thus, it supports the adaptation hypothesis and is consistent with the recently proposed models of viral antigenic cooperation \cite{skums2015antigenic,domingo2019social}, which suggests the emergence of complementary specialization of viral variants and their adaptation to the host environment as a quasi-social system.

\section{Conclusions and future research}

We presented a theoretical framework for study of fractal properties of networks, which is based on the combinatorial and graph-theoretical notions and methods. We anticipate that it could be useful for theoretical studies of properties of network models as well as for analysis of experimental networks which arise in biology, epidemiology and social sciences. In particular, this study has been triggered by biological questions raised by studies of genetic and cross-immunoreactivity networks of RNA viruses, such as HIV and Hepatitis C \cite{campo2014next,skums2015antigenic}. These networks have small-world properties, which impede application of standard methods for the analysis of their fractal characteristics. The ideas presented in this paper may facilitate study of properties of such networks using convergent machineries of graph theory, general topology, algorithmic information theory and discrete optimization. In particular, the problems of detection of fractal properties could be now formulated and studied as rigorously defined algorithmic problems. One such problem is the detection of fractal graphs, another - calculation of invariants measuring how close is the given graph from being a fractal (in terms of number of edges or certain modification operations).  It should be noted, however, that these problems are likely to be algorithmically hard since, as discussed above, the fractality recognition problem is NP-complete. Thus, approximation algorithms and heuristics for these problems should be developed, and the graph classes where the problems become polynomially solvable should be identified. Another important direction of future research is the discovery of structural properties of graph fractals in general and in particular graph classes, as well as identification of network construction models which produce fractals. In particular, our results suggest that fractality is more common for real-life networks than can be concluded from analysis of classical network models.

\section{Funding} This work was partially supported by the National Institutes of Health, grant number 1R01EB025022 "Viral Evolution and Spread of Infectious Diseases in Complex Networks: Big Data Analysis and Modeling".

\bibliographystyle{plain}
\bibliography{fracdim_biblio}

\begin{thebibliography}{10}

\bibitem{ahn2010link}
Yong-Yeol Ahn, James~P Bagrow, and Sune Lehmann.
\newblock Link communities reveal multiscale complexity in networks.
\newblock {\em nature}, 466(7307):761, 2010.

\bibitem{alon1986covering}
Noga Alon.
\newblock Covering graphs by the minimum number of equivalence relations.
\newblock {\em Combinatorica}, 6(3):201--206, 1986.

\bibitem{babai1992linear}
L{\'a}szl{\'o} Babai and P{\'e}ter Frankl.
\newblock {\em Linear algebra methods in combinatorics with applications to
  geometry and computer science}.
\newblock Department of Computer Science, The University of Chicago, 1992.

\bibitem{babaits1996kmern}
A~Babaitsev and R.~Tyshkevich.
\newblock K-dimensional graphs (in russian).
\newblock {\em Vestnik Akademii nauk Belarusi}, (3):75--81, 1996.

\bibitem{berge1984hypergraphs}
Claude Berge.
\newblock {\em Hypergraphs: combinatorics of finite sets}, volume~45.
\newblock Elsevier, 1984.

\bibitem{bollobas2001random}
B{\'e}la Bollob{\'a}s.
\newblock {\em Random graphs}.
\newblock Number~73. Cambridge university press, 2001.

\bibitem{bollobas2003mathematical}
B{\'e}la Bollob{\'a}s and Oliver~M Riordan.
\newblock Mathematical results on scale-free random graphs.
\newblock {\em Handbook of graphs and networks: from the genome to the
  internet}, pages 1--34, 2003.

\bibitem{bounova2012overview}
Gergana Bounova and Olivier De~Weck.
\newblock Overview of metrics and their correlation patterns for
  multiple-metric topology analysis on heterogeneous graph ensembles.
\newblock {\em Physical Review E}, 85(1):016117, 2012.

\bibitem{brandstadt1999graph}
Andreas Brandst{\"a}dt, Van~Bang Le, and Jeremy~P Spinrad.
\newblock {\em Graph classes: a survey}.
\newblock SIAM, 1999.

\bibitem{bron1973algorithm}
Coen Bron and Joep Kerbosch.
\newblock Algorithm 457: finding all cliques of an undirected graph.
\newblock {\em Communications of the ACM}, 16(9):575--577, 1973.

\bibitem{buhrman1999kolmogorov}
Harry Buhrman, Ming Li, John Tromp, and Paul Vit{\'a}nyi.
\newblock Kolmogorov random graphs and the incompressibility method.
\newblock {\em SIAM Journal on Computing}, 29(2):590--599, 1999.

\bibitem{campo2014next}
David~S Campo, Zoya Dimitrova, Lilian Yamasaki, Pavel Skums, Daryl~TY Lau,
  Gilberto Vaughan, Joseph~C Forbi, Chong-Gee Teo, and Yury Khudyakov.
\newblock Next-generation sequencing reveals large connected networks of
  intra-host hcv variants.
\newblock {\em BMC genomics}, 15(5):S4, 2014.

\bibitem{chladny2010factorisation}
Miroslav Chladn{\`y} and Martin {\v{S}}koviera.
\newblock Factorisation of snarks.
\newblock {\em the electronic journal of combinatorics}, 17(1):32, 2010.

\bibitem{chung2002connected}
Fan Chung and Linyuan Lu.
\newblock Connected components in random graphs with given expected degree
  sequences.
\newblock {\em Annals of combinatorics}, 6(2):125--145, 2002.

\bibitem{cooper2010product}
Jeffrey~R Cooper.
\newblock {\em Product dimension of a random graph}.
\newblock PhD thesis, Miami University, 2010.

\bibitem{domingo2019social}
Pilar Domingo-Calap, Ernesto Segredo-Otero, Mar{\'\i}a Dur{\'a}n-Moreno, and
  Rafael Sanju{\'a}n.
\newblock Social evolution of innate immunity evasion in a virus.
\newblock {\em Nature microbiology}, 4(6):1006, 2019.

\bibitem{dorogovtsev2013evolution}
Sergei~N Dorogovtsev and Jos{\'e}~FF Mendes.
\newblock {\em Evolution of networks: From biological nets to the Internet and
  WWW}.
\newblock OUP Oxford, 2013.

\bibitem{edgar2007measure}
Gerald Edgar.
\newblock {\em Measure, topology, and fractal geometry}.
\newblock Springer Science \& Business Media, 2007.

\bibitem{erdHos1977chromatic}
Paul Erd{\H{o}}s and Robin~J Wilson.
\newblock On the chromatic index of almost all graphs.
\newblock {\em Journal of combinatorial theory, series B}, 23(2-3):255--257,
  1977.

\bibitem{evako1994dimension}
Alexander~V Evako.
\newblock Dimension on discrete spaces.
\newblock {\em International Journal of Theoretical Physics}, 33(7):1553--1568,
  1994.

\bibitem{falconer2004fractal}
Kenneth Falconer.
\newblock {\em Fractal geometry: mathematical foundations and applications}.
\newblock John Wiley \& Sons, 2004.

\bibitem{flaxman2005high}
Abraham Flaxman, Alan Frieze, and Trevor Fenner.
\newblock High degree vertices and eigenvalues in the preferential attachment
  graph.
\newblock {\em Internet Mathematics}, 2(1):1--19, 2005.

\bibitem{gardner1977mathematical}
Martin Gardner.
\newblock Mathematical games.
\newblock {\em Scientific American}, 236(2):121--127, 1977.

\bibitem{gismondi2013dynamic}
Mar{\'\i}a~In{\'e}s Gismondi, Juan Mar{\'\i}a~D{\'\i}az Carrasco, Pamela Valva,
  Pablo~Daniel Becker, Carlos~Alberto Guzm{\'a}n, Rodolfo~H{\'e}ctor Campos,
  and Mar{\'\i}a~Victoria Preciado.
\newblock Dynamic changes in viral population structure and
  compartmentalization during chronic hepatitis c virus infection in children.
\newblock {\em Virology}, 447(1-2):187--196, 2013.

\bibitem{hell2004graphs}
Pavol Hell and Jaroslav Ne{\v{s}}etril.
\newblock {\em Graphs and homomorphisms, volume 28 of Oxford Lecture Series in
  Mathematics and its Applications}.
\newblock Oxford University Press, 2004.

\bibitem{holyer1981np}
Ian Holyer.
\newblock The np-completeness of edge-coloring.
\newblock {\em SIAM Journal on computing}, 10(4):718--720, 1981.

\bibitem{janson2010large}
Svante Janson, Tomasz {\L}uczak, and Ilkka Norros.
\newblock Large cliques in a power-law random graph.
\newblock {\em Journal of Applied Probability}, 47(4):1124--1135, 2010.

\bibitem{klavvzar2013hamming}
Sandi Klav{\v{z}}ar, Iztok Peterin, and Sara~Sabrina Zemlji{\v{c}}.
\newblock Hamming dimension of a graph—the case of sierpi{\'n}ski graphs.
\newblock {\em European Journal of Combinatorics}, 34(2):460--473, 2013.

\bibitem{konig1916graphen}
D{\'e}nes K{\"o}nig.
\newblock {\"U}ber graphen und ihre anwendung auf determinantentheorie und
  mengenlehre.
\newblock {\em Mathematische Annalen}, 77(4):453--465, 1916.

\bibitem{lara2017identification}
James Lara, Mahder Teka, and Yury Khudyakov.
\newblock Identification of recent cases of hepatitis c virus infection using
  physical-chemical properties of hypervariable region 1 and a radial basis
  function neural network classifier.
\newblock {\em BMC genomics}, 18(10):880, 2017.

\bibitem{snapnets}
Jure Leskovec and Andrej Krevl.
\newblock {SNAP Datasets}: {Stanford} large network dataset collection.
\newblock \url{http://snap.stanford.edu/data}, June 2014.

\bibitem{li2005towards}
Lun Li, David Alderson, John~C Doyle, and Walter Willinger.
\newblock Towards a theory of scale-free graphs: Definition, properties, and
  implications.
\newblock {\em Internet Mathematics}, 2(4):431--523, 2005.

\bibitem{li2009Kolmogorov}
Ming Li and Paul Vit{\'a}nyi.
\newblock {\em An introduction to Kolmogorov complexity and its applications}.
\newblock Springer Science \& Business Media, 2009.

\bibitem{mcdiarmid2017modularity}
Colin McDiarmid and Fiona Skerman.
\newblock Modularity of regular and treelike graphs.
\newblock {\em Journal of Complex Networks}, 6(4):596--619, 2017.

\bibitem{metelsky2003}
Yury Metelsky and Regina Tyshkevich.
\newblock Line graphs of helly hypergraphs.
\newblock {\em SIAM Journal on Discrete Mathematics}, 16(3):438--448, 2003.

\bibitem{mowshowitz2012entropy}
Abbe Mowshowitz and Matthias Dehmer.
\newblock Entropy and the complexity of graphs revisited.
\newblock {\em Entropy}, 14(3):559--570, 2012.

\bibitem{newman2003structure}
Mark~EJ Newman.
\newblock The structure and function of complex networks.
\newblock {\em SIAM review}, 45(2):167--256, 2003.

\bibitem{palla2005uncovering}
Gergely Palla, Imre Der{\'e}nyi, Ill{\'e}s Farkas, and Tam{\'a}s Vicsek.
\newblock Uncovering the overlapping community structure of complex networks in
  nature and society.
\newblock {\em nature}, 435(7043):814, 2005.

\bibitem{poljak1981complexity}
Svatopluk Poljak, Vojt{\v{e}}ch R{\"o}dl, and Daniel Turzik.
\newblock Complexity of representation of graphs by set systems.
\newblock {\em Discrete Applied Mathematics}, 3(4):301--312, 1981.

\bibitem{schaper2011epistasis}
Steffen Schaper, Iain~G Johnston, and Ard~A Louis.
\newblock Epistasis can lead to fragmented neutral spaces and contingency in
  evolution.
\newblock {\em Proceedings of the Royal Society B: Biological Sciences},
  279(1734):1777--1783, 2011.

\bibitem{shanker2007defining}
O~Shanker.
\newblock Defining dimension of a complex network.
\newblock {\em Modern Physics Letters B}, 21(06):321--326, 2007.

\bibitem{skums2015antigenic}
Pavel Skums, Leonid Bunimovich, and Yury Khudyakov.
\newblock Antigenic cooperation among intrahost hcv variants organized into a
  complex network of cross-immunoreactivity.
\newblock {\em Proceedings of the National Academy of Sciences},
  112(21):6653--6658, 2015.

\bibitem{smyth2010topological}
Michael~B Smyth, Rueiher Tsaur, and Iain Stewart.
\newblock Topological graph dimension.
\newblock {\em Discrete Mathematics}, 310(2):325--329, 2010.

\bibitem{song2005self}
Chaoming Song, Shlomo Havlin, and Hernan~A Makse.
\newblock Self-similarity of complex networks.
\newblock {\em Nature}, 433(7024):392--395, 2005.

\bibitem{tyshkevich1989matr}
Regina Tyshkevich.
\newblock Matroid decompositions of graphs.
\newblock {\em Discretnaya matematika}, 1(3):129--138, 1989.

\bibitem{vizing1964estimate}
Vadim~G Vizing.
\newblock On an estimate of the chromatic class of a p-graph.
\newblock {\em Discret Analiz}, 3:25--30, 1964.

\bibitem{willinger2009mathematics}
Walter Willinger, David Alderson, and John~C Doyle.
\newblock Mathematics and the internet: A source of enormous confusion and
  great potential.
\newblock {\em Notices of the American Mathematical Society}, 56(5):586--599,
  2009.

\bibitem{woodhouse2016stochastic}
Francis~G Woodhouse, Aden Forrow, Joanna~B Fawcett, and J{\"o}rn Dunkel.
\newblock Stochastic cycle selection in active flow networks.
\newblock {\em Proceedings of the National Academy of Sciences},
  113(29):8200--8205, 2016.

\end{thebibliography}

\end{document}